\DeclareMathAlphabet{\mathscrbf}{OMS}{mdugm}{b}{n}
\tikzset{
  no line/.style={draw=none,
    commutative diagrams/every label/.append style={/tikz/auto=false}},
  from/.style args={#1 to #2}{to path={(#1)--(#2)\tikztonodes}}}
\title{
The Devinatz-Hopkins Theorem via  Algebraic Geometry}
\author{Rok Gregoric}
\thanks{University of Texas at Austin}
\date{\today}
\newtheorem{theorem}{Theorem}[section]
\newtheorem*{theoremm}{Theorem}
\newtheorem{corollary}[theorem]{Corollary}
\newtheorem{lemma}[theorem]{Lemma}
\newtheorem{prop}[theorem]{Proposition}
\theoremstyle{definition}
\newtheorem{definition}[theorem]{Definition}
\newtheorem{ex}[theorem]{Examples}
\newtheorem{remark}[theorem]{Remark}
\newcommand*{\Set}{{\mathcal S\text{et}}}
\newcommand*{\Cat}{\mathcal C\mathrm{at}_\infty}
\newcommand*{\CAlg}{{\operatorname{CAlg}}}
\newcommand*{\Ab}{{\mathcal{A}\mathrm{b}}}
\newcommand*{\mX}{\mathcal X}
\newcommand*{\mS}{\mathcal S}
\newcommand*{\sO}{\mathcal O}
\newcommand*{\sF}{\mathscr F}
\newcommand*{\sE}{\mathscr E}
\newcommand*{\E}{\mathbb E_\infty}
\newcommand*{\heart}{\heartsuit}
\newcommand*{\sheafhom}{\mathscr{H}\kern -.5pt om}
\DeclareMathOperator{\Novak}{\mathscr{N}\text{\kern -3pt {\calligra\large ovak}}\,\,}
\DeclareMathOperator{\fHom}{\mathscr{H}\text{\kern -3pt {\calligra\large om}}\,}
\DeclareMathOperator{\Hom}{\operatorname{Hom}}
\DeclareMathOperator{\Sp}{\operatorname{Sp}}
\DeclareMathOperator{\Zp}{\operatorname{Sp}}
\DeclareMathOperator{\Spcn}{\operatorname{Sp^{cn}}}
\DeclareMathOperator{\Fun}{\operatorname{Fun}\,}
\DeclareMathOperator{\Spec}{\operatorname{Spec}}
\DeclareMathOperator{\Spf}{\operatorname{Spf}}
\DeclareMathOperator{\Zpf}{\operatorname{Spf}}
\DeclareMathOperator{\Map}{\operatorname{Map}}
\DeclareMathOperator{\QCoh}{\operatorname{QCoh}}
\DeclareMathOperator{\Tot}{\operatorname{Tot}}
\DeclareMathOperator{\G}{\mathbf G}
\DeclareMathOperator{\CMon}{\operatorname{CMon}}
\DeclareMathOperator{\Mod}{\operatorname{Mod}}
\renewcommand{\i}{\infty}
\renewcommand{\Pr}{\mathcal P\mathrm r}
\newcommand{\w}{\widehat}
\renewcommand{\i}{\infty}
\DeclareFontFamily{U}{matha}{\hyphenchar\font45}
\DeclareFontShape{U}{matha}{m}{n}{
      <5> <6> <7> <8> <9> <10> gen * matha
      <10.95> matha10 <12> <14.4> <17.28> <20.74> <24.88> matha12
      }{}
\DeclareSymbolFont{matha}{U}{matha}{m}{n}
\DeclareMathSymbol{\varsubset}{3}{matha}{"80}
\renewcommand{\i}{\infty}
\begin{document}
\maketitle

\begin{abstract}
In this note, we show how a continuous action of the Morava stabilizer group $\mathbb G_n$ on the Lubin-Tate spectrum $E_n$,  satisfying the conclusion $E_n^{h\mathbb G_n}\simeq L_{K(n)} S$ of the Devinatz-Hopkins Theorem, may be obtained by   monodromy on the stack of oriented deformations of formal groups in the context of formal spectral algebraic geometry. 
\end{abstract}

A classical and computationally invaluable result in chromatic homotopy theory, the Morava Change of Rings Theorem,  see for instance \cite{DevMorava}, identifies the second page of the $K(n)$-local Adams spectral sequence for the Lubin-Tate spectrum $E_n$ as continuous group cohomology
$$
E^{s, t}_2\simeq \mathrm H^s_\mathrm{cont}(\mathbb G_n; \pi_t(E_n)) \Rightarrow \pi_{t-s}(L_{K(n)}S).
$$
A conceptual spectrum-level explanation for this isomorphism is given by the Devinatz-Hopkins Theorem of \cite{DevHop}. It asserts the existence of a (suitably-interpreted) continuous action of the Morava stabilizer group $\mathbb G_n$ on the Lubin-Tate spectrum $E_n$, such that its continuous homotopy fixed points  are
\begin{equation}\label{The Theorem}
E_n^{h\mathbb G_n}\simeq L_{K(n)}S.
\end{equation}
The proof of the equivalence \eqref{The Theorem} has by now become largely standard, using nilpotence technology applied to the $K(n)$-local Amitsur complex of $E_n$, and ultimately stemming from the key observation of Hopkins \textit{et.\,al.}\,that the Adams spectral sequence of $E_n$ possesses a horizontal vanishing line. The somewhat less straightforward part is instead identifying said Amitsur complex with the simplicial bar resolution of a suitably-interpreted continuous action of $\mathbb G_n$ on $E_n$. That was accomplished in a somewhat \textit{ad hoc} manner in \cite{DevHop}, and in various contexts of continuous group actions of spectra such as \cite{Davies}, \cite{Quick} (though these approaches ostensibly amount to enriching the construction from \cite{DevHop}). A formalization using the condensed set technology of \cite{Cond} to tackle continuity has also been announced by Clausen-Scholze.

In contrast, we propose to side-step the issue of continuous actions altogether. Instead, we exhibit the action in an appropriate context of formal spectral algebraic geometry.
Our results may be summarized as follows.

\begin{theoremm}
The Morava stabilizer group $\mathbb G_n$ admits a canonical action on the formal spectral stack $\Spf (E_n)$. For 
continuous homotopy fixed points of this action defined as  $E^{h\mathbb G_n}_n := \sO(\Spf(E_n)/\mathbb G_n)$, there is a canonical equivalence $E^{h\mathbb G_n}_n\simeq L_{K(n)}S.$ Furthermore, the three resulting spectral sequences coincide:
\begin{enumerate}
\item The descent spectral sequence for the structure sheaf on $\Spf(E_n)/\mathbb G_n$
$$
E^{s, t}_2 = \mathrm H^s(\Spf(E_n)/\mathbb G_n; \pi_t(\sO))\Rightarrow \pi_{t-s}(L_{K(n)}S).
$$
\item The homotopy fixed point spectral sequence for the $\mathbb G_n$-action on $E_n$
$$
E^{s, t}_2 =\mathrm H^s(\mathrm B\mathbb G_n;\pi_t(E_n))\Rightarrow \pi_{t-s}(L_{K(n)}S).
$$
\item The $K(n)$-local Adams spectral sequence for $E_n$
$$
E_2^{s,t}= \mathrm{Ext}^{s,t}_{\pi_*(L_{K(n)}(E_n\otimes E_n))}(\pi_*(E_n), \pi_*(E_n))\Rightarrow \pi_{t-s}(L_{K(n)}S).
$$
\end{enumerate} 
\end{theoremm}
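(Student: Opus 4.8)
The plan is to realize the quotient $\Spf(E_n)/\mathbb G_n$ as the formal completion of the moduli stack of oriented formal groups at its height-$n$ point, and to produce all three spectral sequences from one cosimplicial spectrum, namely the Amitsur/\v{C}ech complex of the atlas $\Spf(E_n)\to\Spf(E_n)/\mathbb G_n$. First I would construct the action: by the spectral Lubin--Tate theorem---Lurie's representability of the functor of oriented deformations of the height-$n$ Honda formal group $\Gamma_0$ over the perfect residue field---the formal spectral scheme $\Spf(E_n)$ represents this functor. Because the extended automorphism group $\mathbb G_n$ acts on $\Gamma_0$, transport of structure endows the deformation functor, and hence its representing object $\Spf(E_n)$, with a canonical action of the profinite group object $\mathbb G_n$. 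I then define $\Spf(E_n)/\mathbb G_n$ to be the stack quotient, so that tautologically $\sO(\Spf(E_n)/\mathbb G_n)=\sO(\Spf(E_n))^{h\mathbb G_n}=\lim_{\mathrm B\mathbb G_n}E_n$.

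Next I would compute the \v{C}ech nerve of the atlas. Since the quotient is the action groupoid, the $(m+1)$-fold fiber product $\Spf(E_n)\times_{\Spf(E_n)/\mathbb G_n}\cdots\times_{\Spf(E_n)/\mathbb G_n}\Spf(E_n)$ is $\mathbb G_n^m\times\Spf(E_n)$, and the essential input is the geometric form of Morava's computation, namely that passing to functions gives
$$
\sO\big(\mathbb G_n^m\times\Spf(E_n)\big)\simeq\Map_{\mathrm{cont}}(\mathbb G_n^m,E_n)\simeq L_{K(n)}\big(E_n^{\o(m+1)}\big),
$$
where the identifications rest on $\pi_0L_{K(n)}(E_n\o E_n)\cong\operatorname{Cont}(\mathbb G_n,\pi_0E_n)$ together with the interpretation of functions on the profinite scheme $\mathbb G_n$ as continuous functions and of formal fiber products as $K(n)$-completed tensor products. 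Consequently the single cosimplicial spectrum $[m]\mapsto\sO(\mathbb G_n^m\times\Spf(E_n))$ is simultaneously the descent complex of the atlas, the bar resolution of the $\mathbb G_n$-action on $E_n$, and the $K(n)$-local Amitsur complex of $E_n$.

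Totalizing, I would then obtain $\sO(\Spf(E_n)/\mathbb G_n)\simeq\Tot\big([m]\mapsto L_{K(n)}(E_n^{\o(m+1)})\big)$, which is equivalent to $L_{K(n)}S$ by the standard nilpotence-technology input---the horizontal vanishing line for the $E_n$-based $K(n)$-local Adams spectral sequence---that I may assume. The coincidence of the three spectral sequences is then immediate: each is the spectral sequence associated to the totalization of the \emph{same} cosimplicial spectrum, and the $E_2$-identifications $\mathrm H^s(\Spf(E_n)/\mathbb G_n;\pi_t\sO)\cong\mathrm H^s(\mathrm B\mathbb G_n;\pi_tE_n)\cong\operatorname{Ext}^{s,t}$ reduce to descent along $\mathbb G_n$ together with the Morava change of rings identifying continuous cohomology of $\mathbb G_n$ with $\operatorname{Ext}$ over the Morava stabilizer algebra.

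The main obstacle I anticipate is the second step: proving rigorously, inside formal spectral algebraic geometry, the equivalence $\sO(\mathbb G_n^m\times\Spf(E_n))\simeq L_{K(n)}(E_n^{\o(m+1)})$ at the level of cosimplicial spectra. This is precisely where the profinite nature of $\mathbb G_n$ and the role of $K(n)$-localization must be controlled: one must verify that functions on the profinite group object $\mathbb G_n$ really are continuous functions, that fiber products in the formal category compute the $K(n)$-locally completed smash products rather than their naive counterparts, and that these identifications are natural in $[m]$ so as to respect the cosimplicial structure maps. In effect, the continuity issues that this geometric approach is designed to absorb resurface here as the single verification that the formal, profinite geometry faithfully encodes Morava's computation; granting this, the remaining steps are formal.
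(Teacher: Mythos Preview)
Your outline is essentially correct and arrives at the same conclusion via the same cosimplicial spectrum, but the route you take to the crucial identification differs from the paper's in one important respect. You treat the equivalence
\[
\sO\big(\mathbb G_n^m\times\Spf(E_n)\big)\simeq L_{K(n)}\big(E_n^{\otimes(m+1)}\big)
\]
as the ``essential input'' to be supplied by Morava's computation $\pi_0 L_{K(n)}(E_n\otimes E_n)\cong\operatorname{Cont}(\mathbb G_n,\pi_0 E_n)$; this is exactly what the original Devinatz--Hopkins argument does, and you rightly flag it as the main obstacle. The paper instead \emph{proves} this equivalence geometrically, before taking functions: it identifies the unoriented deformation functor with a relative de~Rham space $(\Spec\kappa/\mathcal M_{\mathrm{FG}})_{\mathrm{dR}}$, applies a general monodromy lemma in an $\infty$-topos to obtain an equivalence of simplicial objects
\[
\check{\mathrm C}^\bullet\big(\Spf(E_n)\to *\big)\;\simeq\;\mathrm B^\bullet_{\mathbb G_n}\big(*,\Spf(E_n)\big),
\]
and then simply applies $\sO$. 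Since $\sO$ of the left side is by definition the completed (hence $K(n)$-local) Amitsur complex, the desired cosimplicial equivalence falls out with no appeal to Morava's change of rings, which instead becomes a \emph{corollary}. In short: your argument is the classical one rephrased stack-theoretically, whereas the paper's point is that the de~Rham/monodromy framework lets one bypass Morava's computation as an input and derive it instead. Your opening line about realizing the quotient as a formal completion of the oriented moduli stack is precisely the idea the paper exploits, but you do not actually use it in your argument; the paper does.
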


Our approach is based on a theorem of Lurie \cite[Theorem 5.1.5]{Elliptic 2}, identifying $\Spf (E_n)$ with the moduli stack of oriented deformation of a height $n$ formal group. We show that the Morava stabilizer group action arises as an instance of monodromy actions on de Rham spaces.
To establish the above version of the Devinatz-Hopkins Theorem in our setting, we employ similar arguments to the analogous considerations in classical formal algebraic geometry from \cite[Chapter 7]{Goerss}.

The computational underpinning of our proof (somewhat obscured in our account) is the fundamental observation from \cite{DevHop} that the $K(n)$-local Adams spectral sequence for $E_n$ possesses a horizontal vanishing line. Ours is in that sense analogous to all of the currently known approaches to the Devinatz-Hopkins Theorem, including, to the best of the author's understanding, the forthcoming work of Clausen-Scholze. The latter construct the continuous (or in their setting more precisely: condensed) Morava stabilizer group action  similarly to us, in that they employ results\footnote{Though unlike our account, where the algebro-geometric aspect of the results in \cite{Elliptic 2} are center-stage, the approach of Clausen-Scholze only relies on the more flexible functoriality of Lubin-Tate theory (in particular: that its base can be taken to be an arbitrary perfect $\mathbf F_p$-algebra as base, as opposed to only a perfect field) afforded by Lurie's construction, as compared to the traditional one by Goerss-Hopkins-Miller.} from \cite{Elliptic 2}. In particular, we wish to make it clear that the majority of our proof of the Devinatz-Hopkins Theorem follows the same reasoning and insights as  the original account in \cite{DevHop}. 

Our primary contribution is a novel way to obtain the Morava stabilizer group action by way of formal spectral algebraic geometry, building on Lurie's  work in \cite{Elliptic 2} and \cite{SAG}. Related applications of those results to topics in chromatic homotopy theory, primarily concerning Gross-Hopkins duality, are considered in \cite{Zanath}.

\subsection*{Acknowledgments}

I would like to thank Andrew Blumberg and David Ben-Zvi, without whose constant support and encouragement this note would surely have never come to be. Thanks also to Ben Antieau, David Gepner, Nat Stapleton, and especially Paul Goerss, for offering helpful comments on the draft. And finally, I am grateful to Agnès Beaudry, Mike Hill, Markus Pflaum, and Dylan Wilson, for organizing \textit{Chromatic Homotopy: Journey to the Frontier} at UC Boulder in 2018, where I had my first chance to really breathe in the fresh air of chromatic homotopy theory.

\section{Background on formal spectral algebraic geometry}

We begin by summarizing some notions and results from \cite{SAG} and \cite{Elliptic 2} which are key for the purpose of this note.

\subsection{Adic $\E$-rings and formal SAG}
From the functor of points perspective, formal spectral algebraic geometry, in the form relevant to us and in \cite{Elliptic 2} (but slightly differently from \cite[Definition 8.1.1.5]{SAG}, where a connectivity assumption is imposed throughout; see \cite[\textit{Nonconnective Ring Spectra} on page 13]{SAG}), concerns functors $\CAlg^\mathrm{ad}_\mathrm{cpl}\to\mS$.

Here $\CAlg^\mathrm{ad}_\mathrm{cpl}$ denotes the $\i$-category of \textit{complete adic $\E$-rings} in the sense of \cite[Definition 0.0.11]{Elliptic 2}. That is, an object of it consists of an $\E$-ring $A$, together with a topology on $\pi_0(A)$ which admits a finitely generated ideal of definition $I\subseteq \pi_0(A)$, such that the topology on $\pi_0(A)$ is equivalent to the $I$-adic topology, and finally such that the $\E$-ring $A$ is $I$-complete in the sense of \cite[Definition 7.2.3.22]{HA}. Given such a complete adic $\E$-ring $A$, we define its \textit{formal spectrum} to be the corepresentable functor $\Spf(A) : \CAlg^\mathrm{ad}_\mathrm{cpl}\to\mS$ given by
$$
B\mapsto \Map^\mathrm{cont}_{\CAlg}(A, B):= \Map_{\CAlg}(A, B)\times_{\Hom_{\CAlg^\heart}\left(\pi_0(A), \pi_0(B)\right)}\Hom^\mathrm{cont}_{\CAlg^\heart}\left(A, B\right).
$$
Of course, the embedding $\CAlg^\mathrm{ad}_\mathrm{cpl}\to \Fun(\CAlg^\mathrm{ad}_\mathrm{cpl}, \mS)$ is fully faithful, and its codomain is a convenient place to do formal spectral algebraic geometry.

\subsection{Formal groups over $\E$-rings}
As an instance of that motto, the theory of formal groups over $\E$-rings is developed in \cite[Chapter 1]{Elliptic 2}. We give a slightly informal account, and refer to \textit{loc.\,cit.}~for a precise and detailed account.

\begin{definition}\label{def of fg}
A \textit{formal group over an $\E$-ring $A$} is an abelian group object in the $\infty$-category of 1-dimensional  fiber-smooth formal spectral $A$-schemes.
\end{definition}

\begin{remark}\label{remarkee}
There are a number of caveats concerning the above definition:
\begin{enumerate}[label =(\arabic*)]
\item The notion of an \textit{abelian group object} must  be understood in the sense of \cite[Section 1.2]{Elliptic 1}. That is to say, we must equip its Yoneda presheaf with a factorization through the functor
 $\Omega^\infty:\Mod^{\mathrm{cn}}_\mathbf Z\to \mS$, or equivalently, the
 forgetful functor $\mathcal{T}\mathrm{op}\mathcal A\mathrm{b}\to \mS$.  This is a \textit{strictified} version of the more familiar notion of a grouplike $\E$-algebra objects, since the Yoneda presheaf is in the latter case asked to factor through $\Omega^\infty :\Spcn\to \mS$, or equivalently, the forgetful functor $\CMon^\mathrm{gp}(\mS)\to\mS$.

\item 
The requirement of \textit{fiber-smoothness} on a formal $A$-scheme $X$ is taken in the sense of \cite[Definition 11.2.3.1]{SAG}, and roughly amounts to asking for $X$ to be locally isomorphic to the formal affine line  $\widehat{\mathbf A}^1_A=\Spf (A[\![t]\!])$. In particular, this implies that $X$ is a flat over $A$. This differs from the notion of differential smoothness in the sense of \cite[Definition 11.2.2.2]{SAG}, which imposes conditions on the cotangent complex $L_{X/A}$, but is incompatible with flatness unless $A$ is a $\mathbf Q$-algebra. Since we want ordinary formal groups over commutative rings to be special cases of Definition \ref{def of fg}, and they are indeed flat, we therefore have no choice but to use fiber-smoothness instead of differential smoothness.

\item \label{moltres}
Definition \ref{def of fg} is really only correct when the $\E$-ring $A$ is connective. For a non-connective $\E$-ring $A$, we should instead define formal groups over $A$ to be formal groups in the above sense over the connective cover $\tau_{\ge 0}(A)$. However, certain constructions associated to a formal group $\w{\G}$, for instance the $\E$-algebra of functions $\sO_{\w{\G}}$, depend on whether we are considering it as existing over $A$ or over $\tau_{\ge 0}(A)$.
\end{enumerate}
\end{remark}

\begin{ex}
 The following are the only classes of formal groups that we will be concerned with in this note:
\begin{itemize}
\item 
Over a commutative ring $A$, viewed as a discrete $\E$-ring, Definition \ref{def of fg}
reproduces the usual meaning of (as always, $1$-dimensional smooth) formal groups over $A$. 
\item Let $A$ be a \textit{complex periodic} $\E$-ring, i.e.\ complex orientable and $\pi_2(A)$ is a locally free $\pi_0(A)$ module of rank $1$. Then the \textit{Quillen formal group of $A$} is
$$\w{\G}{}^{\CMcal Q}_A :=\Spf (C^*(\mathbf{CP}^\infty; A)),$$
which indeed gives rise to a formal group over $A$ by \cite[Subsection 4.1.3]{Elliptic 2}.
\end{itemize}
\end{ex}

Formal groups over $A$ form an $\i$-category $\mathcal M_\mathrm{FG}(A)$, and this constuction is functorial in $A$ by base-change:

\begin{definition}\label{pullback}
Let $f:A\to B$ be a map of $\E$-rings, and $\w{\G}$ a formal group over $A$. The 
 pullback of formal spectral schemes along $\Spec(f):\Spec (B)\to \Spec (A)$ gives rise to a formal group over $B$, which we denote $f^*\widehat{\G}$.
\end{definition}

There is also another slightly different form of functoriality afforded to formal groups. Sending
$$\w{\G}\mapsto \w{\G}{}^0 :=\Spf(\pi_0(\sO_{\w{\G}}))$$
gives rise to a functor $\mathcal M_\mathrm{FG}(A)\to \mathcal M_\mathrm{FG}(\pi_0(A))$. Informally, this sends a spectral formal group to its underlying ordinary formal group.

\begin{remark}
When the $\E$-ring $A$ is connective, the preceding construction is a special case of Definition \ref{pullback}. Indeed, in that case there exists a map of $\E$-rings $t :A\to \pi_0(A)$, and $\w{\G}{}^0\simeq t^*\w{\G}$. For a non-connective $\E$-ring $A$ on the other hand, the connection between $A$ and $\pi_0(A)$ is only through the span $A\leftarrow \tau_{\ge 0}(A)\to \pi_0(A)$, and so $\w{\G}\mapsto \w{\G}{}^0$ is not merely an instance of base-change. This is closely related to the subtleties alluded to in item \ref{moltres} of Remark \ref{remarkee}.
\end{remark}

\subsection{Orientations and deformations of formal groups}
The class of formal groups singled out by the following definition is of special importance in relation to chromatic homotopy theory. Here an $\mathbb E_\infty$-ring $A$ is called \textit{complex periodic}  \cite[Definition 4.1.8]{SAG} if it is both complex orientable and weakly 2-periodic.
\begin{definition}[{\cite[Proposition 4.3.23]{Elliptic 2}}]
A formal group $\w{\G}$ over an $\E$-ring $A$ is \textit{oriented} if and only if
 $A$ is complex periodic and 
 $\w{\G}\simeq \w{\G}{}^{\CMcal Q}_A$
is its Quillen formal group. We denote by $\mathcal M_{\mathrm{FG}}^\mathrm{or}(A)\subseteq\mathcal M_{\mathrm{FG}}(A)$ the subspace of oriented formal groups over $A$.
\end{definition}

\begin{remark}
Though the above form is the most practical for our purposes, we would be remiss not to summarize an equivalent but better-motivated approach to defining oriented formal groups \cite[Definition 4.3.9]{Elliptic 2}.
To any formal group $\w{\G}$ over an $\E$-ring $A$ we may by \cite[Subsections 5.2.1 - 5.2.3]{Elliptic 2} associate an $A$-module $\omega_{\w{\G}}$, its \textit{dualizing line}, and the analogue of the module of invariant differentials on a classical formal group. An \textit{orientation of $\w{\G}$} then amounts to an $A$-linear equivalence $\omega_{\w{\G}}\simeq \Sigma^{-2}(A)$.
This is in spirit a $2$-shifted analogue of the various notions of orientation in
 classical geometric contexts, where it usually means some kind of trivialization of a bundle of volume forms.
\end{remark}

The space of \textit{deformations of $\w{\G}_0$ over $A$} is defined as
$$
\mathrm{Def}_{\w{\G}_0}(A) :=\varinjlim_I \mathrm{Hom}_{\CAlg^\heart}(\kappa, \pi_0(A)/I)\times_{\mathcal M_\mathrm{FG}(\pi_0(A)/I)}\mathcal M_\mathrm{FG}(A),
$$
with the colimit ranging over all the ideals of definition $I\subseteq\pi_0(A)$. Informally, this consists of a ring homomorphism $f:\kappa\to \pi_0(A)/I$, a formal group $\w{\G}$ over the $\E$-ring $A$, and an isomorphism $f^*\w{\G}_0\simeq q^*\w{\G}{}^0$ of formal groups over $\pi_0(A)/I$, where $q : \pi_0(A)\to \pi_0(A)/I$ is the quotient projection. \textit{Oriented deformations} are defined analogously as
$$
\mathrm{Def}_{\w{\G}_0}(A) :=\varinjlim_I \mathrm{Hom}_{\CAlg^\heart}(\kappa, \pi_0(A)/I)\times_{\mathcal M_\mathrm{FG}(\pi_0(A)/I)}\mathcal M_\mathrm{FG}^\mathrm{or}(A).
$$
Both of these construction respect pullback along maps of adic $\E$-rings, and as such give rise to  functors $\mathrm{Def}_{\w{\G}_0}, \mathrm{Def}^\mathrm{or}_{\w{\G}_0}: \CAlg^\mathrm{ad}_\mathrm{cpl}\to \mS$.

 The following theorem of Lurie, a  cousin of the Goerss-Hopkins-Miller Theorem, may be taken as the definition of Lubin-Tate spectra, and is the bedrock of this note.

\begin{theorem}[{\cite[Theorem 5.1.5, Remark 6.0.7]{Elliptic 2}}]
Let $\w{\G}_0$ be a formal group of finite height over a perfect field  $\kappa$ of characteristic $p>0$. 
Let $E(\kappa, \w{\G})$ be the Lubin-Tate spectrum of $\w{\G}_0$, viewed as an adic $\E$-ring with respect to the $n$-th Landweber ideal $\mathfrak I_n\subseteq\pi_0(E(\kappa, \w{\G}_0)).$ There is a natural equivalence
$$
\Spf(E(\kappa, \w{\G}_0))\simeq \mathrm{Def}^\mathrm{or}_{\w{\G}_0}
$$
in the $\i$-category $\Fun(\CAlg^\mathrm{ad}_\mathrm{cpl}, \mS)$.
\end{theorem}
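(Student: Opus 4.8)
The plan is to reduce the asserted equivalence to a corepresentability statement, proven in two stages — first for the unoriented deformation functor, then for its oriented refinement — and then to identify the corepresenting object with the Lubin-Tate spectrum by a computation of homotopy groups. Throughout, the essential input is Lurie's spectral deformation theory of formal groups together with his analogue of Artin's representability theorem from \cite{SAG}.

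First I would establish that the unoriented functor $\mathrm{Def}_{\w{\G}_0}\colon \CAlg^\mathrm{ad}_\mathrm{cpl}\to\mS$ is corepresentable by a connective complete adic $\E$-ring $R$. Since $\mathrm{Def}_{\w{\G}_0}$ is built as a fiber product over the moduli functor $\mathcal M_\mathrm{FG}$ along the reduction map, the hypotheses of the representability theorem — that the functor satisfy descent, be infinitesimally cohesive and nilcomplete, and admit a connective cotangent complex — may be inherited from the corresponding properties of $\mathcal M_\mathrm{FG}$ proven in \cite{Elliptic 2}. The cotangent complex of $\mathrm{Def}_{\w{\G}_0}$ is governed by the deformation theory of $\w{\G}_0$, and here the finite-height hypothesis is crucial: it forces the relevant cohomology of the formal group to be free of the expected rank, so that the deformation problem is formally smooth and carries no higher obstructions.

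Having produced $R$, I would pin down its homotopy. The underlying discrete ring $\pi_0(R)$ corepresents the classical deformation functor on ordinary rings, which by the classical Lubin-Tate theorem is the complete local ring $W(\kappa)[\![u_1,\dots,u_{n-1}]\!]$, equipped with its maximal-ideal-adic topology; this maximal ideal is precisely the $n$-th Landweber ideal $\mathfrak I_n$. Formal smoothness then identifies $R$ with a power-series $\E$-ring over the spherical Witt vectors of $\kappa$, determining $\pi_*(R)$ entirely. To pass to orientations, I would apply Lurie's orientation classifier to the universal deformation $\w{\G}^\mathrm{univ}$ over $R$: the oriented deformation functor $\mathrm{Def}^\mathrm{or}_{\w{\G}_0}$ is corepresented by the $\E$-algebra $R^\mathrm{or}$ that universally trivializes the dualizing line as $\omega_{\w{\G}^\mathrm{univ}}\simeq\Sigma^{-2}(R^\mathrm{or})$. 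By construction $R^\mathrm{or}$ is complex periodic, with $\pi_0(R^\mathrm{or})\cong W(\kappa)[\![u_1,\dots,u_{n-1}]\!]$ and $\pi_*(R^\mathrm{or})$ its two-periodicization; matching this homotopy with that of Morava $E$-theory identifies $R^\mathrm{or}\simeq E(\kappa,\w{\G}_0)$ and yields the desired equivalence of corepresentable functors.

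I expect the main obstacle to be the deformation-theoretic heart of the argument: verifying infinitesimal cohesiveness and computing the cotangent complex precisely enough to invoke representability, and then controlling the orientation classifier. The latter is where the weight lies, since one must show that imposing an orientation is compatible with the adic structure and produces a complex periodic ring whose homotopy is exactly the two-periodicization predicted by finite height — this is the substance of the deformation theory developed in \cite[Chapter 5]{Elliptic 2}, and is what ultimately makes the classical Lubin-Tate ring appear as $\pi_0$ of a genuinely spectral object.
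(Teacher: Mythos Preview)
The paper does not give a proof of this theorem: it is stated with an explicit citation to \cite[Theorem 5.1.5, Remark 6.0.7]{Elliptic 2} and is used throughout as a black-box input (the author calls it ``the bedrock of this note''). There is therefore no proof in the paper to compare your proposal against.

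That said, your outline is a faithful high-level summary of Lurie's own argument in \cite{Elliptic 2}: first establish corepresentability of the unoriented deformation functor $\mathrm{Def}_{\w{\G}_0}$ by a connective adic $\E$-ring $R$ via spectral deformation theory, identify $\pi_0(R)$ with the classical Lubin-Tate ring, and then pass to the oriented refinement via the orientation classifier to obtain a complex periodic $\E$-ring with the correct homotopy. One small caution: the final identification $R^{\mathrm{or}}\simeq E(\kappa,\w{\G}_0)$ is not really achieved by ``matching homotopy groups'' against a pre-existing Morava $E$-theory --- in Lurie's treatment the Lubin-Tate spectrum is \emph{defined} to be $R^{\mathrm{or}}$ (see \cite[Remark 6.0.7]{Elliptic 2}), and the agreement with the classical Goerss--Hopkins--Miller construction is then a consequence of the uniqueness afforded by Landweber exactness or obstruction theory. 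So your last step is better phrased as a definition followed by a comparison, rather than as an identification.
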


\begin{remark}
Lurie formulates his result (which also works over more general perfect base rings than a field) in terms of deformations of $p$-divisible groups instead of formal groups. This has the advantage of being more general, applying for instance also to \'etale $p$-divisible groups, and is crucial in the follow-up paper \cite{Elliptic 3} on Hopkins-Kuhn-Ravenel character theory and transchromatic ambidxterity. Alas, for our purposes, since all the $p$-divisible groups in sight would be connected, the analogue of Tate's Theorem in \cite[Section 2.3]{Elliptic 2}, allows us to restrict to formal groups instead. Ultimately however, this is nothing more than an aesthetic preference, and this note could well have been written with the functor $\mathcal M_\mathrm{BT}$ everywhere in place of $\mathcal M_\mathrm{FG}$.
\end{remark}

\section{Morava stabilizer group action and fixed points}
\subsection{Complete Notherian local $\E$-rings}
For the remainder of this note, $\kappa$ will be a perfect field of characteristic $p>0$. We find it convenient to restrict to a smaller subcategory of $\CAlg^\mathrm{ad}_\mathrm{cpl}$, consisting roughly of complete Notherian local $\E$-rings with residue field $\kappa$.

\begin{definition} 
Let  $\CAlg^{\mathrm{cN}}_{/\kappa}\subseteq \CAlg^{\mathrm{ad}}_\mathrm{cpl}$ denote the subcategory spanned by complete adic $\E$-rings $A$ for which the commutative ring $\pi_0(A)$ is a local Noetherian ring with maximal ideal $\mathfrak m$, topologized with respect to the $\mathfrak m$-adic topology, and such that there exists an abstract  (i.e.\ non-specified) isomorphism
 $\pi_0(A)/\mathfrak m\simeq \kappa$.
\end{definition}

\begin{remark}
The notation  $\CAlg^{\mathrm{cN}}_{/\kappa}$ is potentially  misleading.
Indeed, unlike what it my seem to indicate, said $\i$-category \textit{is not} equivalent to a subcategory of the overcategory $\CAlg_{/\kappa}$. That would only hold if we restricted to connective objects on both sides, while our primary interest rests with the non-connective complex periodic $\E$-rings. Similarly, connective objects in $\CAlg^{\mathrm{cN}}_{/\kappa}$ are not Noetherian $\E$-rings in the sense of \cite[Definition 7.2.4.30]{HA}. But we could not have used that notion of Noetherianness in the above definition, since it again only applies to connective $\E$-rings.
\end{remark}

From here on, we will consider the $\i$-category $\Fun(\CAlg^{\mathrm{cN}}_{/\kappa}, \mS)$ as the setting for formal spectral algebraic geometry.
In particular, we will implicitly restrict the domain of the functor $\Spf (A)$ to the subcategory $\CAlg^\mathrm{cN}_{/\kappa}\subseteq\CAlg^\mathrm{ad}_\mathrm{cpl}$ for any adic $\E$-ring $A$.

The functors of the \textit{ring of functions} $\sO : \Fun(\CAlg^{\mathrm{cN}}_{/\kappa}, \mS)\to \CAlg^\mathrm{ad}_\mathrm{cpl}$ and the $\i$-category of \textit{quasi-coherent sheaves} $\QCoh : \Fun(\CAlg^{\mathrm{cN}}_{/\kappa}, \mS)\to\Cat$ are defined by Kan extension from the subcategory of affines (i.e.\ representable functors) on which they are defined as
$$
\sO(\Zpf (A)) := A,\qquad\quad \QCoh(\Zpf(A)):=\Mod^\mathrm{cpl}_A.
$$
Noting that we may have equivalently replaced the $\Cat$ with $\Pr^\mathrm L$ in the definition of quasi-coherent sheaves, we see that any map of functors $f:X\to Y$ induces adjoint functors $f^* :\QCoh(Y)\rightleftarrows \QCoh(X) : f_*$, the familiar pullback and pushforward\footnote{As usual however, pushforward may not be very well-behaved without some basic additional assumptions.} functoriality. In particular, we call pushforward along the terminal map $p : X\to *$ \textit{global sections} and denote $\Gamma(X;\sF) :=p_*(\sF)$ for any $\sF\in\QCoh(X)$. For the structure sheaf $\sF = \sO_X$, global sections $\Gamma(X; \sO_X)\simeq \sO(X)$ recover the ring of functions.

\begin{remark}
Because we are not equipping $\CAlg^{\mathrm{cN}}_{/\kappa}$ with a Grothendieck topology, questions of descent are beyond our reach. Fortunately, as explained for $\QCoh$ in \cite[Proposition 6.2.3.1]{SAG} (in only a slightly different setting), both $\sO$ and $\QCoh$ are agnostic regarding sheafification, making their definition unambiguous.
\end{remark}

\subsection{A short digression on monodromy}
In the proof of Proposition \ref{King of Kings} in the next Subsection, we will need a certain result, which becomes particularly simple and natural when viewed in a slightly more general context than strictly necessary for our purposes. 

Recall that monodromy is classically understood to be the action of the fundamental group $\pi_1(X, x)$ of a base-space $X$ on the fiber $\mathcal L_x$ of a local systems $\mathcal L$ on $X$, acting through parallel transport around loops. The following is a simple incarnation of that idea in the setting of an $\i$-topos, but with the notion of a ``point" being understood in the generalized sense of algebraic geometry.

\begin{lemma}\label{Monodromy}
Let $x:P\to X$ be a morphism in an $\i$-topos $\mX$.
\begin{enumerate}[label =(\roman*)]
\item The ``based loop space" $\Omega_x(X) := P\times_X P$ admits a canonical group structure in the overtopos $\mX_{/P}$, exhibiting it as an object $\Omega_x(X)\in\mathrm{Grp}(\mX_{/P})$. There is a canonical equivalence of simplicial objects\label{un}
$$
\mathrm B^\bullet_{\Omega_x(X)}(P, P)\simeq \mathrm{\check{C}}^\bullet(P\xrightarrow{x}X)
$$
between its bar construction in $\mX_{/P}$ and the \v{C}ech nerve of $x$ in $\mX$.

\item For any object  $Y\in \mX{}_{/X}$, we define its ``fiber over $x$" through the pullback square
\begin{equation}\label{The Square}
\xymatrix{
x^*(Y)\ar[r]\ar[d] & Y\ar[d]\\
P\ar[r]^x & X
}
\end{equation}
 in $\mX$.
This ``fiber" $x^*(Y)\in \mX{}_{/P}$ admits a canonical $\Omega_x(X)$-action, whose bar constriction in $\mX{}_{/P}$ is equivalent to the \v{C}ech nerve in $\mX$ \label{deux}
$$
	\mathrm B^\bullet_{\Omega_x(X)}(P, x^*(Y))\simeq \mathrm{\check{C}}^\bullet(x^*(Y)\to Y).
$$
\end{enumerate}
\end{lemma}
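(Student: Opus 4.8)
The plan is to deduce both statements from Lurie's theory of groupoid objects and effective epimorphisms in an $\infty$-topos (see \cite{SAG}). The starting observation is that the \v{C}ech nerve $\mathrm{\check{C}}^\bullet(x)$ of any morphism is a groupoid object in $\mX$ whose object of objects is $P = \mathrm{\check{C}}^0(x)$, and that groupoid objects in an $\infty$-topos are effective. All of the combinatorics will then be governed by the Segal and groupoid conditions for this particular groupoid object.

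For part \ref{un}, I would first reduce to the case that $x$ is an effective epimorphism. Factoring $x$ as $P\to X_0\hookrightarrow X$ through its image, the monomorphism $X_0\hookrightarrow X$ does not affect any of the relevant fiber products, so that $\Omega_x(X) = P\times_X P\simeq P\times_{X_0}P$ and $\mathrm{\check{C}}^\bullet(P\xrightarrow{x}X)\simeq\mathrm{\check{C}}^\bullet(P\to X_0)$; we may thus replace $X$ by $X_0$ and assume $x$ surjective. Now the effective epimorphism $x$ exhibits $X$ as the geometric realization $|\mathrm{\check{C}}^\bullet(x)|$, and the standard dictionary between effective epimorphisms and group quotients in an $\infty$-topos equips the loop object $\Omega_x(X)$ with a group structure in $\mX_{/P}$: the multiplication is the inner face of the Segal identification $\mathrm{\check{C}}^2(x)\simeq\Omega_x(X)\times_P\Omega_x(X)$, the unit is the degeneracy $\Delta\colon P\to\Omega_x(X)$, and grouplikeness is automatic since $\mathrm{\check{C}}^\bullet(x)$ is a groupoid object. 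The Segal condition identifies the $n$-th term of the two-sided bar construction, namely $\Omega_x(X)^{\times_P n}$, with $\mathrm{\check{C}}^n(x)$, while the face and degeneracy maps on both sides agree because each encodes the same groupoid composition and its identities; this yields the asserted equivalence of simplicial objects.

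For part \ref{deux}, I would obtain the statement from part \ref{un} by base change along the structure map $\phi\colon Y\to X$. The entire diagram of part \ref{un} lives naturally in the slice $\mX_{/X}$: the object $P\xrightarrow{x}X$, the group $\Omega_x(X)\to X$, and every term $\mathrm{\check{C}}^n(x)\to X$ map compatibly to $X$. The pullback functor $\phi^*\colon\mX_{/X}\to\mX_{/Y}$ is left exact and preserves effective epimorphisms, hence carries $\mathrm{\check{C}}^\bullet(x)$ to the \v{C}ech nerve of the base change $x^*(Y) = P\times_X Y\to Y$, and carries the trivial-coefficient bar construction of part \ref{un} to the two-sided bar construction $\mathrm{B}^\bullet_{\Omega_x(X)}(P, x^*(Y))$; this is precisely what equips $x^*(Y)$ with the required $\Omega_x(X)$-action, as the groupoid acts tautologically on objects over its object of objects $P$. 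Applying $\phi^*$ to the equivalence of part \ref{un} then produces the desired equivalence, and one sees that part \ref{un} is the special case $Y = X$, where $x^*(X)\simeq P$ is terminal in $\mX_{/P}$.

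The main obstacle is the coherence in part \ref{un}: promoting the evident termwise identifications $\Omega_x(X)^{\times_P n}\simeq\mathrm{\check{C}}^n(x)$ to an equivalence of simplicial objects, with all face and degeneracy maps and their higher coherences matched, and verifying that the group structure it produces on $\Omega_x(X)$ is the correct one. Checking this simplex by simplex is error-prone, since one must keep track of source/target and left/right conventions; instead I would let the effective-epimorphism machinery supply the coherent equivalence directly, so that the remaining work is only to unwind its identifications in the present notation. Once part \ref{un} is in hand, part \ref{deux} is formal, as the base-change functor $\phi^*$ transports the whole coherent datum at once.
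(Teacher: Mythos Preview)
Your approach is correct and aligns with the paper's, which likewise identifies the \v{C}ech nerve as a groupoid object via the Segal condition and then obtains part \ref{deux} by base-changing part \ref{un} along $Y\to X$ (the paper writes this as $-\times_X Y$ applied degreewise rather than invoking $\phi^*$, but it is the same manipulation).

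Two places where the paper is leaner. First, the reduction to effective epimorphisms is unnecessary: the \v{C}ech nerve of \emph{any} morphism is already a groupoid object, and that is all you need; nothing in the argument uses that $X$ is recovered as the realization. Second, your main obstacle---promoting the termwise identifications to a coherent equivalence of simplicial objects---dissolves once you adopt the paper's viewpoint (citing \cite[\S6.1.2]{HTT} and \cite[Proposition~2.4.2.5]{HA}): group objects and group actions in an $\infty$-topos are \emph{defined} as simplicial objects satisfying the relevant Segal and groupoid conditions. So exhibiting $\check{\mathrm C}^\bullet(x)$ in the form $\Omega_x(X)^{\times_P\bullet}$ is not a preliminary to constructing the group structure; it \emph{is} the group structure, with all coherences included. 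There is no separate equivalence of simplicial objects to manufacture, and hence no simplex-by-simplex bookkeeping to worry about.
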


\begin{proof}
Recall from \cite[Subsection 6.1.2]{HTT} and \cite[Proposition 2.4.2.5]{HA} that group objects and group actions (or their common generalization, groupoid objects) in an $\i$-topos are completely and equivalently encoded by their bar constructions. Thus it is necessary and sufficient to verify that the \v{C}ech complexes in question are of the appropriate forms for a group object and group action respectively.

For \ref{un}, we rewrite the \v{C}ech complex of the morphism  $x$ as
$$
\mathrm{\check{C}}^\bullet(x)\simeq \underbrace{P\times_X\cdots\times_X P}_{\bullet +1}\simeq \underbrace{(P\times_X P)\times_P\cdots \times_P (P\times_X P)}_\bullet\simeq \underbrace{\Omega_x(X)\times_P\cdots\times_P\Omega_x(X)}_{\bullet}.
$$
It follows clearly that it satisfies the Segal condition and exhibits $\Omega_x(X)\in \mathrm{Grp}(\mX_{/P})$.

For \ref{deux}, observe that we may compare the two \v{C}ech nerves in sight via (degree-wise) pullback of simplicial objects. Combining that with point \ref{un}, we get equivalences of simplicial objects
\begin{eqnarray*}
\mathrm{\check{C}}^\bullet(x^*(Y)\to Y)
&\simeq& \mathrm{\check{C}}^\bullet(P\to X)\times_X Y\\
&\simeq &\mathrm B^\bullet_{\Omega_x(X)}(P, P)\times_X Y\\
&\simeq &\mathrm B^\bullet_{\Omega_x(X)}(P, P\times_X Y)\\
&\simeq &\mathrm B^\bullet_{\Omega_x(X)}(P, x^*(Y)),
\end{eqnarray*}
 exhibiting the desired $\Omega_x(X)$-action on the fiber $x^*(Y)$.
\end{proof}

\begin{remark}
In the setting of Lemma \ref{Monodromy}, 
passage to geometric realizations from \ref{un} gives an equivalence $\mathrm B\Omega_x(X)\simeq X^\wedge_x$ between the classifying space for $\Omega_x(X)$ (in the overtopos $\mX_{/P}$) and the so-called nilpotent completion of $X$ at $x$, defined as $X^\wedge_x =|\check{\mathrm C}^\bullet(x)|$. When $x:P\to X$ is an effective epimorphism, then $X^\wedge_x\simeq X$. Then Lemma \ref{Monodromy} \ref{deux} shows that $Y\simeq x^*(Y)/\Omega_x(X)$, generalizing the classical fact that a local system on a connected base-space is completely determined by its monodromy representation.
\end{remark}

\begin{remark}\label{remarques} Let us take for $\mX$ the presheaf $\i$-topos $\Fun(\CAlg^\mathrm{cn},\mS)$, the usual setting for ``functor of points"  spectral algebraic geometry (once again ignoring questions of descent). A connective $\E$-ring $A$ gives rise to the terminal map $x_A:\Spec(A)\to\Spec(A)$, which we may view as an $A$-point of $\Spec(S)$. It follows from Lemma \ref{Monodromy}, \ref{un} that the loop space $\Omega_{x_A}(\Spec(S))$ admits a group structure over $\Spec(A)$. That amounts to an appropriately-interpreted (see \cite{Torii} for a thorough discussion of appropriate coalgebras in this setting) Hopf algebroid structure on $\sO(\Omega_{x_A}(\Spec (S))\simeq A\otimes A$ over $A$. Upon passage to homotopy groups, this recovers the usual ``generalized dual Steenrod algebra" Hopf algebroid $(\pi_*(A), \pi_*(A\otimes A)) = (A_*, A_*A)$. Similarly, given any connective $\E$-ring $A$, the $\Omega_{x_A}(\Spec(S))$-action on the fiber $x_A^*(\Spec(X))$ described in Lemma \ref{Monodromy}, \ref{deux}, gives rise on homotopy groups to the usual ``generalized Steenrod comodule" structure on $\pi_*(A\otimes X) = A_*(X)$. This hints at the relationship between the monodromy construction of Lemma \ref{Monodromy} and generalized Adams spectral sequences, which we partly elucidate in Subsection \ref{Adams section}, and in Remark \ref{ANZZ for real} in a bit more detail in the case of the Adams-Novikov spectral sequence.
\end{remark}

\subsection{Morava stabilizer group action on oriented deformations}

Fix a formal group $\w{\G}_0$ of finite height over $\kappa =\overline{\mathbf F}_p$ and let $\mathbb G(\kappa, \w{\G}_0)$ be its (big, i.e.\ extended) Morava stabilizer group, viewed as an algebraic group, and hence a functor $\CAlg^\mathrm{cN}_{/\kappa}\to \mS$, as explained in \cite[Remark 5.29]{Goerss}. 

\begin{prop}\label{King of Kings}
There exists a canonical action of the Morava stabilizer group $\mathbb G(\kappa,\w{\G}_0)$ on the oriented deformations $\mathrm{Def}^\mathrm{or}_{\w{\G}_0}$ in $\mathrm{Fun}(\CAlg^\mathrm{cN}_{/\kappa}, \mS)$, whose two-sided bar construction is equivalent as a simplicial object in $\Fun(\CAlg^\mathrm{cN}_{/\kappa}, \mS)$
$$ \mathrm{\check{C}}^\bullet(\mathrm{Def}^\mathrm{or}_{\w{\G}_0}\to *)\simeq \mathrm B^\bullet_{\mathbb G(\kappa, \w{\G}_0)}(*, \mathrm{Def}^\mathrm{or}_{\w{\G}_0}) $$
to the \v{C}ech nerve of (the terminal map of) $\mathrm{Def}^\mathrm{or}_{\w{\G}_0}$.
\end{prop}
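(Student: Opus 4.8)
The plan is to obtain the action and the simplicial equivalence simultaneously, as a single application of the monodromy formalism of Lemma \ref{Monodromy}\,\ref{deux} in the $\i$-topos $\mX = \Fun(\CAlg^{\mathrm{cN}}_{/\kappa}, \mS)$, phrasing everything in terms of $\mathrm{Def}^{\mathrm{or}}_{\w{\G}_0}$ via Lurie's Theorem. First I would manufacture a ``de Rham space'' $X$ standing for the moduli of the common special fibre $\w{\G}_0$—a point carrying only its automorphisms—engineered so that its monodromy at the tautological point $x:*\to X$ is the extended stabilizer group, $\Omega_x(X)\simeq\mathbb G(\kappa, \w{\G}_0)$. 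Alongside it I would present the completed moduli $Y$ of oriented deformations of $\w{\G}_0$ as an object of $\mX_{/X}$, with structure map $Y\to X$ the reduction to the special fibre and with fibre over $x$ equal to $x^*(Y)\simeq \mathrm{Def}^{\mathrm{or}}_{\w{\G}_0}$. Lemma \ref{Monodromy}\,\ref{deux} then at once equips $\mathrm{Def}^{\mathrm{or}}_{\w{\G}_0}$ with a canonical $\mathbb G(\kappa, \w{\G}_0)$-action and yields
$$
\mathrm B^\bullet_{\mathbb G(\kappa, \w{\G}_0)}\big(*, \mathrm{Def}^{\mathrm{or}}_{\w{\G}_0}\big)\simeq \mathrm{\check{C}}^\bullet\big(\mathrm{Def}^{\mathrm{or}}_{\w{\G}_0}\to Y\big).
$$

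Next I would justify the two identifications underlying this setup. That the monodromy is $\mathbb G(\kappa, \w{\G}_0)$ is essentially built into $X$: its loops at $x$ are the self-isomorphisms of the special fibre $\w{\G}_0$, and the \emph{extended} rather than the naive automorphism group intervenes because the admissible residue-field embeddings $\kappa\to\pi_0(A)/I$ contribute a Frobenius ambiguity. For the fibre, the conceptual input is that—by \cite[Proposition 4.3.23]{Elliptic 2}—an orientation is not extra structure but pins the underlying formal group down to the Quillen formal group $\w{\G}{}^{\CMcal Q}_A$, determined by $A$ alone; the fibre of the reduction morphism over $x$ thus consists of oriented deformations whose special fibre is rigidified to $\w{\G}_0$, which is by definition $\mathrm{Def}^{\mathrm{or}}_{\w{\G}_0}$.

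Finally, I must trade the \v{C}ech nerve of the reduction morphism $\mathrm{Def}^{\mathrm{or}}_{\w{\G}_0}\to Y$ for that of the terminal morphism $\mathrm{Def}^{\mathrm{or}}_{\w{\G}_0}\to *$. This comes down to proving that $Y\to *$ is a monomorphism—equivalently, that $Y\simeq \mathrm{Def}^{\mathrm{or}}_{\w{\G}_0}/\mathbb G(\kappa, \w{\G}_0)$ is $(-1)$-truncated—whereupon base change along $Y\hookrightarrow *$ leaves \v{C}ech nerves unchanged and the two simplicial objects coincide. Such subterminality is exactly the simple transitivity of the action: freeness is the rigidity of Lubin--Tate deformations, and transitivity is the statement that, since the two underlying formal groups already agree by the previous step, two oriented deformations over a common $A$ differ by a unique element of $\mathbb G(\kappa, \w{\G}_0)$ acting on the rigidification of the special fibre. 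I expect this to be the main obstacle. It is the algebro-geometric avatar of Morava's computation $L_{K(n)}(E_n\otimes E_n)\simeq \mathrm{Map}^{\mathrm{cont}}(\mathbb G_n, E_n)$—equivalently of the shear equivalence $\mathbb G(\kappa, \w{\G}_0)\times\mathrm{Def}^{\mathrm{or}}_{\w{\G}_0}\xrightarrow{\ \sim\ }\mathrm{Def}^{\mathrm{or}}_{\w{\G}_0}\times\mathrm{Def}^{\mathrm{or}}_{\w{\G}_0}$—and establishing it uniformly over all $A\in\CAlg^{\mathrm{cN}}_{/\kappa}$ should require the full strength of the rigidity supplied by Lurie's Theorem rather than merely its pointwise shadow.
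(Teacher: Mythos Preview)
Your overall strategy—monodromy plus a subterminality argument to trade \v{C}ech nerves—is the right shape and matches the paper's. But the paper executes it along a different decomposition that sidesteps your ``main obstacle'' entirely.

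The paper does not apply the monodromy Lemma directly to $\mathrm{Def}^\mathrm{or}_{\w{\G}_0}$. It first treats the \emph{unoriented} deformations (Lemma \ref{Action}): there the monodromy setup comes for free from the de Rham identification $\mathrm{Def}_{\w{\G}_0}\simeq (\Spec(\kappa)/\mathcal M_\mathrm{FG})_\mathrm{dR}$, with $P=\Spec(\kappa)_\mathrm{dR}$, $X=(\mathcal M_\mathrm{FG})_\mathrm{dR}$, and $Y=\mathcal M_\mathrm{FG}$. Note that $P$ is \emph{not} the terminal object of $\Fun(\CAlg^{\mathrm{cN}}_{/\kappa},\mS)$, so your $x:*\to X$ is not available; a separate Lemma \ref{group is group} (this is where $\kappa=\overline{\mathbf F}_p$ enters) is needed to identify the relative loop group $\underline{\mathrm{Aut}}(\w{\G}_0)_\mathrm{dR}$ over $\Spf(W^+(\kappa))$ with the constant group $\mathbb G(\kappa,\w{\G}_0)\times\Spf(W^+(\kappa))$, after which the bar construction over $P$ agrees with the absolute one. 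This yields
$\mathrm{\check{C}}^\bullet(\mathrm{Def}_{\w{\G}_0}\to\mathcal M_\mathrm{FG})\simeq \mathrm B^\bullet_{\mathbb G(\kappa,\w{\G}_0)}(*,\mathrm{Def}_{\w{\G}_0})$.
The oriented statement then follows by base-change along $\mathcal M^\mathrm{or}_\mathrm{FG}\hookrightarrow\mathcal M_\mathrm{FG}$, using the pullback square $\mathrm{Def}^\mathrm{or}_{\w{\G}_0}\simeq \mathrm{Def}_{\w{\G}_0}\times_{\mathcal M_\mathrm{FG}}\mathcal M^\mathrm{or}_\mathrm{FG}$ and the fact that $\mathcal M^\mathrm{or}_\mathrm{FG}$ is subterminal.

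That last fact is the key simplification you are missing: the subterminal object the paper uses is $\mathcal M^\mathrm{or}_\mathrm{FG}$, and its subterminality is a one-line observation—an oriented formal group over $A$, when it exists, is the Quillen formal group $\w{\G}{}^{\CMcal Q}_A$, so $\mathcal M^\mathrm{or}_\mathrm{FG}(A)$ is either empty or a point. This immediately gives
$\mathrm{\check{C}}^\bullet(\mathrm{Def}^\mathrm{or}_{\w{\G}_0}\to *)\simeq \mathrm{\check{C}}^\bullet(\mathrm{Def}_{\w{\G}_0}\to\mathcal M_\mathrm{FG})\times_{\mathcal M_\mathrm{FG}}\mathcal M^\mathrm{or}_\mathrm{FG}$.
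By contrast, your proposed route identifies the subterminal object as the quotient $\mathrm{Def}^\mathrm{or}_{\w{\G}_0}/\mathbb G(\kappa,\w{\G}_0)$ and proposes to prove its $(-1)$-truncatedness via the shear equivalence $\mathbb G(\kappa,\w{\G}_0)\times\mathrm{Def}^\mathrm{or}_{\w{\G}_0}\simeq \mathrm{Def}^\mathrm{or}_{\w{\G}_0}\times\mathrm{Def}^\mathrm{or}_{\w{\G}_0}$. That is precisely the degree-$1$ component of the simplicial equivalence you are trying to establish, so it is closer to the conclusion than to an intermediate step. The paper's decomposition into ``unoriented monodromy over $\mathcal M_\mathrm{FG}$'' plus ``trivial subterminality of $\mathcal M^\mathrm{or}_\mathrm{FG}$'' avoids this entirely.
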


\begin{proof}[Proof]
By the definition of oriented deformations, we have
\begin{equation}\label{useful}
\mathrm{Def}_{\w{\G}_0}^\mathrm{or}\simeq \mathrm{Def}_{\w{\G}_0}\times_{\mathcal M_{\mathrm{FG}}}\mathcal M_{\mathrm{FG}}^\mathrm{or}.
\end{equation}
The factor $\mathcal M_\mathrm{FG}^\mathrm{or}$ in this fibered product may be replaced with $\{\w{\G}{}^{\CMcal Q}_A\}$ when $A$ is complex oriented, and with $\emptyset$ when $A$ is not. It follows from this observation that
$$
\mathrm{\check{C}}^\bullet(\mathrm{Def}^\mathrm{or}_{\w{\G}_0}\to *)\simeq \mathrm{\check{C}}^\bullet(\mathrm{Def}_{\w{\G}_0}\to\mathcal M_{\mathrm{FG}})\times_{\mathcal M_\mathrm{FG}}\mathcal M^\mathrm{or}_\mathrm{FG}.
$$
as the base-change of simplicial objects. Consequently, pulling back the equivalence of simplicial objects from the next Lemma \ref{Action} along the inclusion $\mathcal M_\mathrm{FG}\to \mathcal M_\mathrm{FG}^\mathrm{or}$ gives rise to a $\mathbb G(\kappa, \w{\G}_0)$-action on $\mathrm {Def}^\mathrm{or}_{\w{\G}_0}$ with the desired bar construction.
\end{proof}

\begin{lemma}\label{Action}
There exists a canonical action of the Morava stabilizer group $\mathbb G(\kappa,\w{\G}_0)$ on the unoriented deformations $\mathrm{Def}_{\w{\G}_0}$ in $\mathrm{Fun}(\CAlg^\mathrm{cN}_{/\kappa}, \mS)$, whose two-sided bar construction is equivalent as a simplicial object in $\Fun(\CAlg^\mathrm{cN}_{/\kappa}, \mS)$
$$ \mathrm{\check{C}}^\bullet(\mathrm{Def}_{\w{\G}_0}\to \mathcal M_\mathrm{FG})\simeq \mathrm B^\bullet_{\mathbb G(\kappa, \w{\G}_0)}(*, \mathrm{Def}_{\w{\G}_0}) $$
to the \v{C}ech nerve of of the map $\mathrm{Def}^\mathrm{or}_{\w{\G}_0}\to \mathcal M_\mathrm{FG}$.
\end{lemma}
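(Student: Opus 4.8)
The plan is to realize $\mathrm{Def}_{\w{\G}_0}$ as the ``fiber over a point'' of a reduction (de Rham) map out of $\mathcal M_\mathrm{FG}$, and then read off the asserted action and bar-construction directly from the monodromy Lemma \ref{Monodromy}. For a functor $X:\CAlg^\mathrm{cN}_{/\kappa}\to\mS$, define its \emph{de Rham space} by
$$
X_\mathrm{dR}(A):=\varinjlim_I X(\pi_0(A)/I),
$$
the colimit ranging over the ideals of definition $I\subseteq\pi_0(A)$; since each $\pi_0(A)/I$ is Artinian local with residue field $\kappa$, it lies in $\CAlg^\mathrm{cN}_{/\kappa}$, so the construction is well-defined. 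The formal group $\w{\G}_0$ is classified by a point $x_0:\Spec(\kappa)\to\mathcal M_\mathrm{FG}$, and the underlying-ordinary-formal-group assignment $\w{\G}\mapsto q^*(\w{\G}{}^0)$ furnishes a canonical reduction map $\mathcal M_\mathrm{FG}\to(\mathcal M_\mathrm{FG})_\mathrm{dR}$.

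First I would establish the fiber-product identity
$$
\mathrm{Def}_{\w{\G}_0}\simeq (\Spec\kappa)_\mathrm{dR}\times_{(\mathcal M_\mathrm{FG})_\mathrm{dR}}\mathcal M_\mathrm{FG},
$$
where the left-hand leg is $(x_0)_\mathrm{dR}$ and the right-hand leg is the reduction map above. Unwinding the definitions, the value of the right-hand side at $A$ is
$$
\varinjlim_I\Hom_{\CAlg^\heart}(\kappa,\pi_0(A)/I)\times_{\varinjlim_I\mathcal M_\mathrm{FG}(\pi_0(A)/I)}\mathcal M_\mathrm{FG}(A),
$$
and because the poset of ideals of definition is filtered while the fiber product is a finite limit, these commute in $\mS$ to reproduce precisely the defining colimit of $\mathrm{Def}_{\w{\G}_0}(A)$.

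With this identity in hand, I would apply Lemma \ref{Monodromy} in the presheaf $\i$-topos $\mX=\Fun(\CAlg^\mathrm{cN}_{/\kappa},\mS)$ to the morphism $x:=(x_0)_\mathrm{dR}:(\Spec\kappa)_\mathrm{dR}\to(\mathcal M_\mathrm{FG})_\mathrm{dR}$ and the object $Y:=\mathcal M_\mathrm{FG}$, regarded as an object of $\mX_{/(\mathcal M_\mathrm{FG})_\mathrm{dR}}$ via the reduction map. The pullback square defining $x^*(Y)$ is exactly the fiber-product identity of the previous step, so $x^*(Y)\simeq\mathrm{Def}_{\w{\G}_0}$; part \ref{deux} of the lemma then equips $\mathrm{Def}_{\w{\G}_0}$ with a canonical action of the loop group $\Omega_x\big((\mathcal M_\mathrm{FG})_\mathrm{dR}\big)$ and supplies an equivalence of simplicial objects between its two-sided bar construction and the \v{C}ech nerve of $\mathrm{Def}_{\w{\G}_0}\to\mathcal M_\mathrm{FG}$ --- which is the asserted equivalence.

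The one substantive point, and the step I expect to be the main obstacle, is the identification of the loop group with the Morava stabilizer group,
$$
\Omega_x\big((\mathcal M_\mathrm{FG})_\mathrm{dR}\big)=(\Spec\kappa)_\mathrm{dR}\times_{(\mathcal M_\mathrm{FG})_\mathrm{dR}}(\Spec\kappa)_\mathrm{dR}\simeq\mathbb G(\kappa,\w{\G}_0).
$$
Evaluating the left side at $A$ gives, in the colimit over $I$, pairs of ring maps $f,g:\kappa\to\pi_0(A)/I$ together with an isomorphism $f^*\w{\G}_0\simeq g^*\w{\G}_0$ of formal groups over $\pi_0(A)/I$; as a group object over $(\Spec\kappa)_\mathrm{dR}$ this is the automorphism group functor of the pair $(\kappa,\w{\G}_0)$, comprising a (semilinear) field automorphism of $\kappa$ together with an isomorphism twisting $\w{\G}_0$ into its pullback. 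This is precisely the big Morava stabilizer group in the incarnation of \cite[Remark 5.29]{Goerss}; verifying that the induced group structure matches that of \emph{loc.\,cit.} is the crux that upgrades the bare monodromy action to a genuine $\mathbb G(\kappa,\w{\G}_0)$-action, and is where I would expect to spend the most care.
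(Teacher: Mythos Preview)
Your approach is essentially the paper's: realize $\mathrm{Def}_{\w{\G}_0}$ as the fiber of $\mathcal M_\mathrm{FG}\to(\mathcal M_\mathrm{FG})_\mathrm{dR}$ over $(x_0)_\mathrm{dR}$ and invoke Lemma~\ref{Monodromy}. Two points, however, deserve more care than you give them.

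First, the identification you flag as ``the main obstacle'' is not quite as you state it. Lemma~\ref{Monodromy} produces $\Omega_x\big((\mathcal M_\mathrm{FG})_\mathrm{dR}\big)$ as a group object in the \emph{overcategory} $\mX_{/(\Spec\kappa)_\mathrm{dR}}$, and the monodromy action and its bar construction live there, with all products taken over $(\Spec\kappa)_\mathrm{dR}$. But $(\Spec\kappa)_\mathrm{dR}$ is not the terminal object of $\mX$ (its value at $A$ is the set of ring maps $\kappa\to\pi_0(A)/\mathfrak m$, a torsor under $\mathrm{Gal}(\kappa/\mathbf F_p)$), so your displayed equivalence $\Omega_x\simeq\mathbb G(\kappa,\w{\G}_0)$ cannot hold in $\mX$. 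What one actually proves---and this is the paper's Lemma~\ref{group is group}---is that $\Omega_x\simeq \mathbb G(\kappa,\w{\G}_0)\times(\Spec\kappa)_\mathrm{dR}$ as group objects \emph{over} $(\Spec\kappa)_\mathrm{dR}$, i.e.\ the loop group is constant. This step uses $\kappa=\overline{\mathbf F}_p$: only then is your map $f$ automatically an isomorphism, allowing one to set $g:=f^{-1}g'\in\mathrm{Gal}(\kappa/\mathbf F_p)$ and convert the pair $(g',\varphi)$ into an element of the semidirect product. Once the loop group is known to be pulled back from $*$, the relative action and relative bar construction descend to an absolute $\mathbb G(\kappa,\w{\G}_0)$-action with absolute bar construction, as the Lemma asserts; this descent (relative products becoming absolute ones) is a genuine step, not a triviality.

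Second, the paper restricts to connective $\E$-rings at the outset, citing that $\mathrm{Def}_{\w{\G}_0}$ is left-Kan-extended from there and that the relevant de Rham identification is established in \cite{Elliptic 2} only in that setting. Your direct unwinding of the fiber product may well go through without this restriction, but you should check that the reduction map $\mathcal M_\mathrm{FG}\to(\mathcal M_\mathrm{FG})_\mathrm{dR}$ is unambiguous for nonconnective $A$ (recall formal groups over such $A$ are by definition formal groups over $\tau_{\ge 0}(A)$).
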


\begin{proof}
Unlike oriented deformations, unoriented deformations of formal groups are as a functor determined (as Kan extension) by its restriction to connective $\E$-rings by \cite[Proof of Theorem 3.4.1]{Elliptic 2}. Therefore, let us implicitly restrict all functors to the full subcategory $(\CAlg^\mathrm{cN}_{/\kappa})^\mathrm{cn}\subseteq\CAlg^\mathrm{cN}_{/\kappa}$ spanned by connective $\E$-rings for the rest of this proof.

 There, we have by \cite[Proof of Proposition 3.4.3]{Elliptic 2} a natural identification
$$\mathrm{Def}_{\w{\G}_0}\simeq (\Spec (\kappa)/\mathcal M_\mathrm{FG})_\mathrm{dR}$$
with the relative de Rham space of the morphism $\Spec(\kappa)\to\mathcal M_\mathrm{FG}$ classifying $\w{\G}_0$.
Recall from \cite[Definition 18.2.1.1]{SAG} that the \textit{relative de Rham space} of a map of functors $X\to Y$ is defined as the pullback
\begin{equation}\label{useful2}
(X/Y)_\mathrm{dR} \simeq X_\mathrm{dR}\times_{Y_\mathrm{dR}}Y,
\end{equation}
where the \textit{absolute de Rham space} of a functor $X$ is given by\footnote{Restricting to the subcategory $\CAlg^\mathrm{cN}_{/\kappa}\subseteq\CAlg^\mathrm{ad}_\mathrm{cpl}$ helps substantially here, as no colimiting over nilpotent ideals of definition is necessary.} $X_\mathrm{dR}(A) = X(\pi_0(A)/\mathfrak m)$.

Observe that we have at this point found ourselves in
the setting of Lemma \ref{Monodromy}, with the pullback square
$$
\xymatrix{
\mathrm{Def}_{\w{\G}_0}\ar[r]\ar[d] & \mathcal M_\mathrm{FG}\ar[d]\\
\Spec(\kappa)_\mathrm{dR}\ar[r] &(\mathcal M_\mathrm{FG})_\mathrm{dR}.
}
$$
playing the role of \eqref{The Square}. More precisely,
 we have 
\begin{itemize}
\item an ambient $\i$-topos $\Fun((\CAlg^\mathrm{cN}_{/\kappa})^\mathrm{cn}, \mS),$
\item a ``point" $\mathrm{Spec}(\kappa)_\mathrm{dR}\to (\mathcal M_{\mathrm{FG}})_\mathrm{dR}$,
\item an object $\mathcal M_{\mathrm{FG}}$ over  the ``base space" $(\mathcal M_\mathrm{FG})_\mathrm{dR}$, 
\item and its ``fiber" $\Spec(\kappa)_\mathrm{dR}\times_{(\mathcal M_\mathrm{FG})_\mathrm{dR}}\mathcal M_\mathrm{FG}\simeq \mathrm{Def}_{\w{\G}_0}$.
\end{itemize}
Lemma \ref{Monodromy}, \ref{un} thus exhibits the ``based loop space", which is  the de Rham space $\underline{\mathrm{Aut}}(\w{\G}_0)_\mathrm{dR}$ of
$$\Spec(\kappa)\times_{\mathcal M_\mathrm{FG}}\Spec(\kappa)\simeq \Omega_{\w{\G}_0}(\mathcal M_{\mathrm{FG}})\simeq \underline{\mathrm{Aut}}(\w{\G}_0),$$
the automorphism group  of the formal group $\w{\G}_0$, as a group object in the overcategory $\Fun((\CAlg^\mathrm{cN}_{/\kappa})^\mathrm{cn}, \mS)_{/\Spec (\kappa)_\mathrm{dR}}$. Thus Lemma \ref{Monodromy}, \ref{deux}  equips the ``fiber" $\mathrm{Def}_{\w{\G}_0}$ with the ``monodromy" $\underline{\mathrm{Aut}}(\w{\G}_0)$-action over $\Spec(\kappa)_\mathrm{dR}$, whose bar construction is
$$
\mathrm B^\bullet_{\underline{\mathrm{Aut}}(\w{\G}_0)_\mathrm{dR}}(\Spec(\kappa)_\mathrm{dR}, \mathrm{Def}_{\w{\G}_0})\simeq \mathrm{\check C}^\bullet(\mathrm{Def}_{\w{\G}_0}\to \mathcal M_\mathrm{FG}).
$$
In light of  Lemma \ref{group is group}, this $\underline{\mathrm{Aut}}({\w{\G}_0})_\mathrm{dR}$-action on the deformation (pre)stack $\mathrm{Def}_{\w{\G}_0}$ in  the overcategory $\Fun((\CAlg^\mathrm{cN}_{/\kappa})^\mathrm{cn}, \mS)_{/\Spec (\kappa)_\mathrm{dR}}$ is equivalent to a $\mathbb G(\kappa, \w{\G}_0)$-action on it in $\Fun((\CAlg^\mathrm{cN}_{/\kappa})^\mathrm{cn}, \mS)$, exhibited on the level of 
 bar constructions (see Remark \ref{Captain Planet}) by the equivalence
\begin{equation}\label{Barista}
\mathrm B^\bullet_{\underline{\mathrm{Aut}}(\w{\G}_0)_\mathrm{dR}}(\Spec(\kappa)_\mathrm{dR}, \mathrm{Def}_{\w{\G}_0})
\simeq
\mathrm B^\bullet_{\mathbb G(\kappa, \w{\G}_0)}(*, \mathrm{Def}_{\w{\G}_0}),
\end{equation}
and finishing the proof.
\end{proof}

\begin{remark}\label{Captain Planet}
We must clarify that the two bar constructions appearing on each side of the equivalence \eqref{Barista} are formed in different $\i$-categories. That is to say, the products comprising the simplices on the left-hand-side are all taken over $\Spec(\kappa)_\mathrm{dR}$, while on the right-hand-side, the products are absolute, i.e.\ taken over the terminal object $*$.
\end{remark}

\begin{remark}The de Rham space $\Spec(\kappa)_\mathrm{dR}$ that we encountered above in the proof of Lemma \ref{Action} is equivalent to the affine formal scheme $\Spf (W^+(\kappa))$, where $W^+(\kappa)$ the $\E$-ring of \textit{spherical Witt vectors over $\kappa$}, as defined in \cite[Example 5.2.7]{Elliptic 2}. Indeed, in \cite[Proof of Theorem 5.2.5]{Elliptic 2} the spherical Witt vectors are defined to corepresent as an affine formal scheme the relative de Rham space $(\Spec(\kappa)/\Spec(S))_\mathrm{dR}$. But since clearly $\Spec(S)\ \simeq \Spec(S)_\mathrm{dR}$, it follows that $(\Spec(\kappa)/\Spec(S))_\mathrm{dR}\simeq \Spec(\kappa)_\mathrm{dR}$ as claimed. More concretely, the universal property of the spherical Witt vectors may be written as
$$
\Map^\mathrm{cont}_\CAlg (W^+(\kappa), A)\simeq \varinjlim_{I} \Hom_{\CAlg^\heart}(\kappa, \pi_0(A)/I)
$$
for any adic $\E$-ring $A$, and with the colimit ranging over all of the finitely generated ideals of definition in $\pi_0(A)$. Another characterization of it is that $W^+(\kappa)$ is a flat $p$-complete $\E$-ring and $\pi_0(W^+(\kappa)) = W(\kappa)$ recovers the usual ring of ($p$-typical) Witt vectors.
\end{remark}

\begin{lemma}\label{group is group}
There is a canonical equivalence $\underline{\mathrm{Aut}}(\w{\G}_0)_\mathrm{dR}\simeq \mathbb G(\kappa, \w{\G}_0)\times \Spf (W^+(\kappa))$ of group objects in $\Fun((\CAlg^\mathrm{cN}_{/\kappa})^\mathrm{cn}, \mS)_{/\Spf (W^+(\kappa))}$.
\end{lemma}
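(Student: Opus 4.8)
The plan is to verify the equivalence on the level of the functor of points, producing the trivialization of the projection to $\Spf(W^+(\kappa))$ by hand. Recall from the proof of Lemma \ref{Action} the identification $\underline{\mathrm{Aut}}(\w{\G}_0)\simeq\Spec(\kappa)\times_{\mathcal M_\mathrm{FG}}\Spec(\kappa)$, with both legs classifying $\w{\G}_0$, and that applying Lemma \ref{Monodromy}, \ref{un} with $P=\Spec(\kappa)_\mathrm{dR}$ and $X=(\mathcal M_\mathrm{FG})_\mathrm{dR}$ exhibits $\underline{\mathrm{Aut}}(\w{\G}_0)_\mathrm{dR}$ as a group object over $\Spec(\kappa)_\mathrm{dR}$ via the first (``source'') projection. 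Invoking the preceding remark's equivalence $\Spec(\kappa)_\mathrm{dR}\simeq\Spf(W^+(\kappa))$, the structure map in the statement is precisely this projection. Using the definition $X_\mathrm{dR}(A)=X(\pi_0(A)/\mathfrak m)$, for $A\in(\CAlg^\mathrm{cN}_{/\kappa})^\mathrm{cn}$ with residue field $L:=\pi_0(A)/\mathfrak m$ the value $\underline{\mathrm{Aut}}(\w{\G}_0)_\mathrm{dR}(A)$ unwinds to the space of triples $(f,g,\phi)$, where $f,g\colon\kappa\to L$ are ring maps and $\phi\colon f^*\w{\G}_0\simeq g^*\w{\G}_0$ is an isomorphism of formal groups over $L$, with the map to the base recording $f$. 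Since $L$ is a discrete field, the formal groups occurring here are ordinary and their isomorphism spaces are discrete, so $\underline{\mathrm{Aut}}(\w{\G}_0)_\mathrm{dR}$ is a $0$-truncated (set-valued) functor and no higher coherence intervenes.

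Next I would trivialize the bundle. Crucially, by the definition of $\CAlg^\mathrm{cN}_{/\kappa}$ the residue field $L$ is abstractly isomorphic to $\kappa=\overline{\mathbf F}_p$; since every ring endomorphism of $\overline{\mathbf F}_p$ is an automorphism, both $f$ and $g$ are isomorphisms $\kappa\xrightarrow{\sim}L$, so no genuine field extension occurs. Fixing the base point $f$ and transporting along the equivalence $f^*$ converts the datum $(g,\phi)$ into a pair $(\sigma,\overline\phi)$, where $\sigma:=f^{-1}g\in\mathrm{Aut}(\kappa)=\mathrm{Gal}(\overline{\mathbf F}_p/\mathbf F_p)$ and $\overline\phi\colon\w{\G}_0\simeq\sigma^*\w{\G}_0$ is an isomorphism over $\kappa$. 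Because $\w{\G}_0$ has finite height over the algebraically closed field $\kappa$, every Galois twist $\sigma^*\w{\G}_0$ is again isomorphic to $\w{\G}_0$, so $\sigma$ ranges over all of $\mathrm{Gal}(\overline{\mathbf F}_p/\mathbf F_p)$ and the fibre is exactly the set of ``semilinear automorphisms'' of $\w{\G}_0$, i.e. an extension of $\mathrm{Gal}(\overline{\mathbf F}_p/\mathbf F_p)$ by $\mathrm{Aut}(\w{\G}_0)$. After transport everything is computed over $\kappa$ in terms of $\w{\G}_0$ alone, so this group is manifestly independent of $A$ and matches the extended Morava stabilizer functor $\mathbb G(\kappa,\w{\G}_0)$ of \cite[Remark 5.29]{Goerss}. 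As the identification depends only on $f$ and is natural in $A$, it assembles into an equivalence $\underline{\mathrm{Aut}}(\w{\G}_0)_\mathrm{dR}\simeq\mathbb G(\kappa,\w{\G}_0)\times\Spf(W^+(\kappa))$ of functors over $\Spf(W^+(\kappa))$, with $\mathbb G(\kappa,\w{\G}_0)$ pulled back from the terminal object as the constant group factor.

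The remaining and, to my mind, principal task is to promote this levelwise equivalence to one of \emph{group} objects over $\Spf(W^+(\kappa))$. Here I would compare the group law carried by $\underline{\mathrm{Aut}}(\w{\G}_0)_\mathrm{dR}$ through Lemma \ref{Monodromy}, \ref{un} --- the composition law of the monodromy groupoid, read off from the \v{C}ech/bar description --- against the intrinsic multiplication of $\mathbb G(\kappa,\w{\G}_0)$. The delicate point is the semidirect-product bookkeeping: transporting composable pairs along the varying source map must be shown to reproduce the multiplication of the semidirect product $\mathrm{Aut}(\w{\G}_0)\rtimes\mathrm{Gal}(\overline{\mathbf F}_p/\mathbf F_p)$, in which the Galois factor acts on the automorphism factor through its action on twists. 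I expect this is cleanest to check directly on the simplicial objects, matching the Segal pieces of the bar construction with those of the classifying object of $\mathbb G(\kappa,\w{\G}_0)$, so as to keep naturality in $A$ and compatibility with the identity sections (the diagonal, corresponding to $(\mathrm{id},\mathrm{id})$) transparent throughout. Granting the conventions of \cite[Remark 5.29]{Goerss}, this ultimately reduces to the standard identification of $\mathrm{Aut}(\w{\G}_0)$ with the unit group of the maximal order in the relevant division algebra, together with its Frobenius-semilinear enhancement.
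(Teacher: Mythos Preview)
Your proposal is correct and follows essentially the same route as the paper: unwind the de Rham functor to discrete data $(f_1,f_2,\varphi)$, use $\kappa=\overline{\mathbf F}_p$ to see that the field maps are isomorphisms, and transport along $f_1^{-1}$ to land in $\mathrm{Gal}(\kappa/\mathbf F_p)\ltimes\mathrm{Aut}(\w{\G}_0)$. The only difference is emphasis: where you flag the group-law compatibility as the ``principal task'' requiring a simplicial check and division-algebra input, the paper simply asserts that the bijection is visibly compatible with composition and functorial in $A$---since everything is $0$-truncated and the group law on both sides is literally composition of twisted isomorphisms, no further structure (and certainly no identification with units in a division algebra) is needed.
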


\begin{proof}
By unwinding the definitions, we find for any connective $A\in \CAlg_{/\kappa}^\mathrm{cN}$ that
$$\underline{\mathrm{Aut}}(\w{\G}_0)_\mathrm{dR}(A)\simeq \Spec (\kappa)(\pi_0(A))/\mathfrak m)\times_{\mathcal M_\mathrm{FG}(\pi_0(A)/\mathfrak m)}\Spec(\kappa)(\pi_0(A)/\mathfrak m)
$$
consists of a pair of maps $f_1, f_2 : \kappa\to \pi_0(A)/\mathfrak m$ and an isomorphism $\varphi : f_1^*\w{\G}_0\to f_2^*\w{\G}$ of formal groups over $\pi_0(A)/\mathfrak m$. In particular, it is a discrete space. The functor $\underline{\mathrm{Aut}}(\w{\G}_0)_\mathrm{dR}$ therefore factor through  $\Set\hookrightarrow \mS $ on the left, and through $\pi_0 :(\CAlg^\mathrm{cN})^\mathrm{cn}_{/\kappa}\to(\CAlg^\heart)^\mathrm{cN}_{/\kappa}$ on the right. The same holds for $\mathbb G(\kappa, \w{\G}_0)\times \Spec (\kappa)_\mathrm{dR}$ by definition of the de Rham space. Hence it suffices to exhibit an isomorphism between these two functors in the ordinary category $\Fun((\CAlg^\mathrm{cN}_{/\kappa})^\mathrm{cn}, \Set)_{/\Spec(\kappa)_\mathrm{dR}}$.

Thus fix a complete Noetherian local commutative ring $A$ with residue field $\kappa$. Recall from the above discussion that elements of $\underline{\mathrm{Aut}}(\w{\G}_0)_\mathrm{dR}(A)$ consist of triples $(f_1, f_2, \varphi)$. Fixing $f_1\in \Spec(\kappa)_\mathrm{dR}(A)$ (since we wish to work over $\Spf (W^+(\kappa))\simeq\Spec(\kappa)_\mathrm{dR}$), we obtain an element
$$
(g, \psi)\in \mathrm{Gal}(\kappa/\mathbf F_p)\ltimes \mathrm{Aut}_{\mathrm{FGrp}(\kappa)}(\w{\G}_0)= \mathbb G(\kappa, \w{\G}_0)
$$
as follows.
Thanks to the hypothesis that $A/\mathfrak m \simeq \kappa$, the field map $f_1$ may be abstractly identified identified with a field endomorphism of $\kappa=\overline{\mathbf F}_p$. Any such endomorphism must fix the prime subfield $\mathbf F_p$, and since the inclusion $\mathbf F_p\subseteq \overline{\mathbf F}_p$ is algebraic, this implies that it is actually an automorphism. It follows that $f_1 :\kappa\to A/\mathfrak m$ is a field isomorphism, so we can set $g:= f_1^{-1}\circ f_2$. We obtain the formal group isomorphism over $\kappa$ as
$$
\psi : \w{\G}_0\xrightarrow{\varphi} (f_1^{-1})^*f_2^*\w{\G}_0\simeq g^*\w{\G}_0
$$
It is clear from the description that this procedure is bijective, compatible with the group structure, and functorial in $A$, and hence gives rise to an equivalence as claimed.
\end{proof}

\begin{remark}\label{continuous cochains}
The matter of viewing $\mathbb G(\kappa, \w{\G}_0)$ as a profinite group scheme here comes from the classical observation that that topology coincides with the usual Zariski topology on automorphisms. Indeed, as we noted in the proof, all the functors involved in Lemma \ref{group is group} factor through the functor $\pi_0 : \CAlg\to\CAlg^\heart$, and are as such a matter of classical algebraic geometry. In that context, see  \cite[Theorem 7.18]{Goerss}, or \cite[Lecture 19]{Lurie Chromatic}.

On the other hand, let us explain where the profinite structure on $\mathbb G(\kappa, \w{\G}_0)$ \textit{is coming from} from the algebro-geometric perspective.
Let us view the fixed formal group as a functor $\w{\G}_0 : (\CAlg^\mathrm{Art}_{/\kappa})^\heart\to \Set$ from Artinian local rings with residue field $\kappa$ (i.e.\ infinitesimal extensions of the point $\Spec (\kappa)$). 
Consider the subcategory $\mathrm{Nil}^{\le n}_{/\kappa}\subseteq(\CAlg^\mathrm{Art}_{/\kappa})^\heart$ of local Artinian rings with $\mathfrak m^{n+1}=0$. Restriction and Kan extension back along this inclusion produces a functor $\w{\G}{}_0^{\le n}:  (\CAlg^\mathrm{Art}_{/\kappa})^\heart\to \Set$, which we may view as the $n$-th infinitesimal neighborhood; Goerss calls this the \textit{$n$-bud} of the formal group $\w{\G}_0$, see in particular \cite[3.24 Remark]{Goerss}. Since every ideal in an Artinian local ring is nilpotent, the tower
$$
\mathrm{Nil}^{\le 0}_{/\kappa}\subseteq \mathrm{Nil}^{\le 1}_{/\kappa}\subseteq \mathrm{Nil}^{\le 2}_{/\kappa}\subseteq \mathrm{Nil}^{\le 3}_{/\kappa}\subseteq
\cdots
\subseteq(\CAlg^\mathrm{Art}_{/\kappa})^\heart
$$
is exhaustive and the canonical map $\varinjlim\w{\G}{}_0^{\le n}\to\w{\G}_0$ is an equivalence. Furthermore, any morphism of formal groups $\w{\G}\to \w{\G}'$ induces a family of maps $\w{\G}{}^{\le n}\to \w{\G}{}'^{\le n}$ for all $n\ge 0$, which induces an isomorphism
$$
\mathbb G(\kappa, \w{\G}_0) =\mathrm{Aut}
(\w{\G}_0)\simeq \varprojlim\mathrm{Aut}(\w{\G}{}^{\le n}_0).
$$
Each factor in this filtered limit is finite, recovering the usual profinite structure on the Morava stabilizer group. In particular, this implies that the product
$$
\mathbb G(\kappa, \w{\G}_0) \times \mathrm{Def}^\mathrm{or}_{\w{\G}_0}\simeq \Zpf (C^*_\mathrm{cont}(\mathbb G(\kappa, \w{\G}_0); E(\kappa,\w{\G}_0)))
$$
is the formal spectrum of an incarnation of continuous $E(\kappa, \w{\G}_0)$-valued cochains on the profinite group $\mathbb G (\kappa, \w{\G}_0)$.
\end{remark}

\begin{remark}
Let us indicate an alternative approach to proving Proposition \ref{King of Kings}. Instead of using the identification \eqref{useful}, we 
can rather observe that we have for any $A\in\CAlg^\mathrm{cN}_{/\kappa}$ a natural equivalence
$$
\mathrm{Def}_{\w{\G}_0}^\mathrm{or}(A)\simeq \mathrm{Def}_{\w{\G}_0}(\pi_0(A))\times_{\mathcal M_\mathrm{FG}(\pi_0(A))}\mathcal M_\mathrm{FG}^\mathrm{or}(A).
$$
In light of that, it suffices to establish an appropriate $\mathbb G(\kappa, \w{\G}_0)$-action on unoriented deformations,
when all functors in sight are post-composed with the functor $A\mapsto \pi_0(A)$. That involves only classical (i.e.\ non-spectral) algebraic geometry, and as such
 avoids coherence issues. Therefore the desired bar construction claim follows inductively from finding an appropriately equivariant equivalence
\begin{equation}\label{bifactor}
\mathrm{Def}_{\w{\G}_0}\times_{\mathcal M_\mathrm{FG}}\mathrm{Def}_{\w{\G}_0}\simeq \mathbb G(\kappa, \w{\G}_0)\times\mathrm{Def}_{\w{\G}_0},
\end{equation}
with both sides restricted to the subcategory $(\CAlg^\mathrm{cN}_{/\kappa})^\heart\subseteq \CAlg^\mathrm{cN}_{/\kappa}$ of discrete objects.
Since any complete Noetherian local ring may be written as a filtered limit of Artinian ones, and we are working in the ``continuous" category, it  further suffices to prove the result upon the further restriction to local Artinian rings (see \cite[Remark 7.3]{Goerss}). For that, we can reference \cite[Theorem 7.18]{Goerss}. 

There is one final small hitch: Goerss' analogue of \eqref{bifactor} takes the fiber product over a moduli functor $\widehat{\CMcal H}(n) =(\mathcal M_\mathrm{FG}^{=n}/\mathcal M_\mathrm{FG}^\mathrm{\le n})_\mathrm{dR}$ instead of over $\mathcal M_\mathrm{FG}$. But since the forgetful functor $\mathrm{Def}_{\w{\G}_0}\to \mathcal M_\mathrm{FG}$ naturally factors through this substack, that makes no difference.
\end{remark}

\subsection{The Devinatz-Hopkins Theorem}\label{Proof's home}
As before,  fix $\kappa = \overline{\mathbf F}_p$ and let $\w{\G}_0$ be of height $n$, (which specifies it up to isomorphism). We denote by $E_n$ and $\mathbb G_n$ the associated Lubin-Tate spectrum and Morava stabilizer group respectively.
Proposition \ref{King of Kings} equips $\mathrm{Def}^\mathrm{or}_{\w{\G}_0}\simeq\Spf(E_n)$ with an action\footnote{Of course this is just the action of $\mathbb G_n$ on $\Spf (E_n)$ induced by the identification of the Morava stabilizer group as $\mathbb G_n\simeq \mathrm{Aut}(E_n)\simeq\mathrm{Aut}(\mathrm{Spf}(E_n)),$ as observed in \cite[Remark 5.0.8]{Elliptic 2}. But from our way of obtaining it, its bar construction is more transparent.}  of $\mathbb G_n$ on $\mathrm{Def}^\mathrm{or}_{\w{\G}_0}\simeq\Spf(E_n)$ in the $\i$-topos $\Fun(\CAlg^\mathrm{cN}_{/\kappa})$. Let $\Spf (E_n)/\mathbb G_n$ denote the quotient of this action in this $\i$-topos. We view its ring of functions
$$
E^{h\mathbb G_n}_n := \sO(\Spf(E_n)/\mathbb G_n)
$$
 as the \textit{continuous homotopy fixed points} of the corresponding action of $\mathbb G_n$ on the Lubin-Tate spectrum $E_n$. See Remark \ref{Justicia} for some further justification of this terminology. 

\begin{theorem}[Devinatz-Hopkins]\label{DH}
With continuous homotopy fixed points defined as above,
the initial map $L_{K(n)}S \to E_n$ in $L_{K(n)}\Sp$ induces an equivalence $E^{h\mathbb G_n}_n\simeq L_{K(n)}S.$
\end{theorem}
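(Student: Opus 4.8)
The plan is to identify the ring of functions $\sO(\Spf(E_n)/\mathbb G_n)$ with the totalization of the cosimplicial $\E$-ring obtained by applying $\sO$ to the bar construction furnished by Proposition \ref{King of Kings}. Since $\sO$ is defined by right Kan extension and sends colimits of functors to limits of $\E$-rings, the quotient $\Spf(E_n)/\mathbb G_n = |\mathrm B^\bullet_{\mathbb G_n}(*, \Spf(E_n))|$ yields
\begin{equation}\label{tot}
\sO(\Spf(E_n)/\mathbb G_n)\simeq \Tot\left(\sO(\mathrm B^\bullet_{\mathbb G_n}(*, \Spf(E_n)))\right).
\end{equation}
The crucial input is the equivalence of simplicial objects in Proposition \ref{King of Kings}, which identifies this bar construction with the \v{C}ech nerve $\mathrm{\check C}^\bullet(\Spf(E_n)\to *)$. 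Applying $\sO$ to this \v{C}ech nerve produces precisely the Amitsur (cobar) complex of the terminal map, whose totalization I would recognize, after $K(n)$-localization, as the $K(n)$-local Adams cobar complex
$$
E_n\rightrightarrows L_{K(n)}(E_n\otimes E_n)\substack{\longrightarrow\\[-.6ex]\longrightarrow\\[-.6ex]\longrightarrow}\cdots,
$$
matching the three spectral sequences asserted in the introductory Theorem.

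First I would make the degreewise computation explicit: the $m$-th term of $\sO$ applied to the \v{C}ech nerve is $\sO$ of the $(m{+}1)$-fold self-product of $\Spf(E_n)$ over the terminal object, and I would verify using Remark \ref{continuous cochains} — which identifies $\mathbb G_n\times\mathrm{Def}^\mathrm{or}_{\w{\G}_0}$ with the formal spectrum of continuous $E_n$-valued cochains — that this recovers $L_{K(n)}(E_n^{\otimes(m+1)})$, the terms of the $K(n)$-local Amitsur complex. This matching of the cosimplicial object with the $K(n)$-local Amitsur complex is the structural heart of the argument; it turns the algebro-geometric quotient into the object whose totalization the Devinatz-Hopkins machinery is designed to compute.

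The remaining, and genuinely hard, step is to show that this totalization converges to $L_{K(n)}S$, equivalently that the map $L_{K(n)}S\to \Tot(\ldots)$ induced by the augmentation is an equivalence. I do not expect to obtain this formally from the algebraic geometry: it rests on the fundamental nilpotence-technology input, namely that $L_{K(n)}S\to E_n$ descends effectively because $E_n$ is a descendable (in fact $K(n)$-locally faithfully flat, with a horizontal vanishing line) commutative algebra in $L_{K(n)}\Sp$. Concretely I would invoke that the $K(n)$-local Amitsur complex of $E_n$ is a limit diagram — this is the content established by Hopkins \textit{et.\,al.}\ and recalled in the discussion following \eqref{The Theorem} — so that the augmented cosimplicial object exhibits $L_{K(n)}S$ as the totalization of \eqref{tot}. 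Combining this convergence with the degreewise identification of the previous paragraph completes the proof.

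The main obstacle is precisely this convergence/descent step: none of the preceding formal-geometric apparatus produces effectivity of the descent, and so I would have to appeal to the external chromatic input (the horizontal vanishing line, or equivalently the descendability of $E_n$ over $L_{K(n)}S$) rather than deriving it within the present framework. Everything else — the passage from the quotient to a totalization via \eqref{tot}, and the degreewise identification of the resulting cosimplicial $\E$-ring with the $K(n)$-local Amitsur complex — is formal once Proposition \ref{King of Kings} and Remark \ref{continuous cochains} are in hand, and simultaneously yields the asserted coincidence of the descent, homotopy-fixed-point, and Adams spectral sequences.
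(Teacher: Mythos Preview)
Your proposal is correct and follows essentially the same route as the paper: use Proposition~\ref{King of Kings} to identify the bar construction with the \v{C}ech nerve, apply $\sO$ degreewise to obtain the $K(n)$-local Amitsur complex, and then invoke the external nilpotence input for convergence of the totalization to $L_{K(n)}S$. The only cosmetic difference is in the degreewise identification: the paper observes directly that $\sO(\Spf(E_n)^{\times(\bullet+1)})\simeq E_n^{\widehat\otimes(\bullet+1)}$ is the completed smash product (the coproduct in $\CAlg^\mathrm{ad}_\mathrm{cpl}$) and then invokes \cite[Corollary 4.5.4]{Elliptic 2} to identify completion with $K(n)$-localization, rather than routing through Remark~\ref{continuous cochains}.
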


\begin{proof}
By definition of the ring of functions, we have $\sO(\Spf (E_n))\simeq E_n$. Similarly, for products we have $\sO(\Spf(E_n)^{\times\bullet})\simeq E_n^{\widehat\otimes \bullet}$, where $\widehat{\otimes}$ denotes the completed smash product of \cite[Corollary 7.3.5.2]{HA}, i.e.\ the coproduct in the $\i$-category $\CAlg^\mathrm{ad}_\mathrm{cpl}$ of complete adic $\E$-rings. Therefore Proposition \ref{King of Kings} implies that
$$E_n^{h\mathbb G_n} \simeq \sO\big(\Zpf(E_n)^{\times(\bullet+1)}\big)
\simeq
 \Tot\big(E_n^{\widehat\otimes (\bullet+1)}\big)$$
It follows from \cite[Corollary 4.5.4]{Elliptic 2} that completion in the $\infty$-category of $E_n$-modules coincides with $K(n)$-localization, and so $E_n^{\widehat\otimes\bullet}\simeq L_{K(n)}(E_n^{\otimes\bullet})$. Thus it suffices to show that $L_{K(n)}S \to E_n$ induces an equivalence
\begin{equation}\label{uhat ue do in the shadous}
\Tot\big( L_{K(n)}\big(E_n^{\otimes (\bullet+1)}\big)\big)\simeq L_{K(n)} S.
\end{equation}
That is a standard result, stemming from the nilpotence of $L_{K(n)}S$ in the $\i$-category $L_{K(n)}\Mod_{E_n}$, and ultimately, the horizontal vanishing line in the $K(n)$-local Adams spectral sequence for $E_n$, see for instance \cite[Proposition AI.3]{DevHop}. But for completeness, we sketch an argument  anyway, following the account \cite{Mathew}, though.

The Smashing Product Theorem of Hopkins-Ravenel \cite[Theorem 7.5.6]{Ravenel} asserts that the Bousfield localization functor $L_n := L_{E_n}$ is smashing, which is by \cite[Proposition 8.2.4]{Ravenel} equivalent to  $L_nS$ being $E_n$-nilpotent. That is further equivalent, by standard nilpotence technology, e.g.\, \cite[Lectures 30 \& 31]{Lurie Chromatic}, to the cosimplicial object $(E^{\otimes (\bullet+1)}_n)$, whose totalization is $L_nS$, being pro-constant. Applying the  functor $L_{K(n)}$ to this cosimplicial object then gives the desired equivalence.
\end{proof}

\begin{remark}
An explicit analysis of how the horizontal vanishing line in the $K(n)$-local Adams spectral sequence for $E_n$ gives rise to the equivalence \eqref{uhat ue do in the shadous} is given in \cite[Section 4 and Appendix I]{DevHop}. The argument  that we gave, following \cite{Mathew}, while phrased slightly differently, is merely a re-packaging of the same fundamental idea - indeed, the proof of the Hopkins-Ravenel Smashing Product Theorem is based on the existence of a uniform vanishing line, see \cite[Section 3.4]{Math3000} for a sketch and relationship to the ``standard nilpotence technology" referred to in the proof above.
\end{remark}

\begin{remark}
The  equivalence of Theorem \ref{DH} is a purely function-level statement. Indeed, the quotient 
$\Zpf (E_n)/\mathbb G_n$ is not equivalent to the affine formal scheme $\Zpf (L_{K(n)}S)$. The value of $\Zpf (E_n)/\mathbb G_n$ on any non-complex-periodic $K(n)$-local $\E$-ring is the empty set, while the value of $\Zpf (L_{K(n)}S)$ is contractible for all $K(n)$-local $\E$-rings.
\end{remark}

Despite the preceding Remark, we may view quasi-coherent sheaves on the quotient $\Spf (E_n)/\mathbb G_n,$ which are by definition a derived version of Morava modules, as a natural incarnation in spectral formal algebraic geometry of the $K(n)$-local stable category.

\begin{corollary}\label{karfiola}
There is a canonical equivalence of symmetric monoidal $\i$-categories $$\QCoh(\Spf (E_n)/\mathbb G_n)\simeq L_{K(n)}\Sp. $$
\end{corollary}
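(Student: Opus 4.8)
The plan is to exploit the simplicial presentation of the quotient furnished by Proposition \ref{King of Kings} and feed it into the limit-preserving functor $\QCoh$, thereby reducing the claim to $K(n)$-local descent along $L_{K(n)}S\to E_n$. Concretely, Proposition \ref{King of Kings} exhibits $\Spf(E_n)/\mathbb G_n$ as the geometric realization of the \v{C}ech nerve $\check{\mathrm C}^\bullet(\Spf(E_n)\to *)$, whose $m$-th term is the affine $\Spf(E_n)^{\times(m+1)}\simeq\Spf(E_n^{\widehat\otimes(m+1)})$. Since $\QCoh$ is defined by right Kan extension from affines, it carries colimits of functors to limits of $\i$-categories, a property that (as noted after its definition) is insensitive to the absence of a Grothendieck topology because the quotient is formed as a genuine colimit in the presheaf $\i$-topos. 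Applying $\QCoh$ therefore yields
$$
\QCoh(\Spf(E_n)/\mathbb G_n)\simeq \Tot\!\big(\Mod^\mathrm{cpl}_{E_n^{\widehat\otimes(\bullet+1)}}\big).
$$

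First I would identify the cosimplicial terms with $K(n)$-local module categories. Exactly as in the proof of Theorem \ref{DH}, completion in $E_n$-modules agrees with $K(n)$-localization by \cite[Corollary 4.5.4]{Elliptic 2}, so that $E_n^{\widehat\otimes(m+1)}\simeq L_{K(n)}(E_n^{\otimes(m+1)})$ and $\Mod^\mathrm{cpl}_{E_n^{\widehat\otimes(m+1)}}\simeq \Mod_{E_n^{\otimes(m+1)}}(L_{K(n)}\Sp)$. The resulting cosimplicial object $[m]\mapsto \Mod_{E_n^{\otimes(m+1)}}(L_{K(n)}\Sp)$ is precisely the Amitsur (descent) cobar construction of the commutative algebra object $E_n\in\CAlg(L_{K(n)}\Sp)$, whence
$$
\QCoh(\Spf(E_n)/\mathbb G_n)\simeq \Tot\!\big(\Mod_{E_n^{\otimes(\bullet+1)}}(L_{K(n)}\Sp)\big).
$$

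It then remains to show that the canonical comparison functor $L_{K(n)}\Sp\to\Tot\big(\Mod_{E_n^{\otimes(\bullet+1)}}(L_{K(n)}\Sp)\big)$ induced by base change along $L_{K(n)}S\to E_n$ is a symmetric monoidal equivalence. This is the $\i$-categorical upgrade of the function-level equivalence \eqref{uhat ue do in the shadous} established in Theorem \ref{DH}: it holds because $E_n$ is \emph{descendable} in $L_{K(n)}\Sp$ in the sense of \cite{Mathew}. Indeed, the pro-constancy of the Amitsur complex of the unit --- equivalently, the $E_n$-nilpotence of $L_{K(n)}S$ deduced from the Hopkins-Ravenel Smashing Theorem in the proof of Theorem \ref{DH} --- is exactly the condition of descendability, and descendability formally implies effective descent for the associated module categories. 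The resulting equivalence is symmetric monoidal because each base-change functor is, so that the totalization is computed in symmetric monoidal $\i$-categories.

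The main obstacle is this last step: passing from the statement about the unit (the totalization of functions) to the statement about all module categories. The content lies entirely in recognizing that the nilpotence technology invoked for Theorem \ref{DH} actually verifies \emph{descendability} of $E_n$ --- a condition phrased solely in terms of the unit --- rather than merely the totalization equivalence for $L_{K(n)}S$, and in citing the general principle of \cite{Mathew} that descendability of a commutative algebra $A$ upgrades to a symmetric monoidal equivalence $\mathcal C\simeq\Tot\big(\Mod_{A^{\otimes(\bullet+1)}}(\mathcal C)\big)$. Once descendability is in hand, the remaining identifications are the routine bookkeeping of the first two paragraphs.
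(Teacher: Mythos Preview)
Your proposal is correct and follows essentially the same route as the paper: express $\QCoh(\Spf(E_n)/\mathbb G_n)$ as a totalization of complete module categories via the \v{C}ech nerve from Proposition~\ref{King of Kings}, identify these with $K(n)$-local module categories, and then invoke descendability of $E_n$ in $L_{K(n)}\Sp$ \`a la \cite{Mathew}. The paper compresses your last two paragraphs into a single citation of \cite[Proposition 10.10]{Mathew}, which packages exactly the ``descendability $\Rightarrow$ effective descent for module categories'' principle you spell out.
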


\begin{proof}
It follows from the proof of Theorem \ref{DH} that
$$\QCoh(\Spf (E_n)/\mathbb G_n)\simeq  \Tot \big(\Mod^\mathrm{cpl}_{E_n^{\widehat{\otimes} (\bullet+1)}}\big)\simeq  \Tot\Big(L_{K(n)}\Mod_{ L_{K(n)}\left(E_n^{\otimes (\bullet+1)}\right)}\Big),$$
which is equivalent to  the $K(n)$-local stable $\i$-category in \cite[Proposition 10.10]{Mathew}.
\end{proof}

\subsection{Analogue over a general base}
At the cost of replacing the Morava stabilizer group with the more involved algebro-geometric group $\mathscr G: = \underline{\mathrm{Aut}}(\w{\G}_0)_\mathrm{dR}$, the contents of this Section  still hold after dropping the assumption that $\kappa=\overline{\mathbf F}_p$.

\begin{prop}\label{Queen of Queens}
Let formal group $\w{\G}_0$ of finite height over a perfect field $\kappa$ of positive characteristic.
There exists a canonical  $\mathscr G$-action on  $\mathrm{Def}^\mathrm{or}_{\w{\G}_0}$ in $\mathrm{Fun}(\CAlg^\mathrm{cN}_{/\kappa}, \mS)_{/\Spec(W^+(\kappa))}$, whose two-sided bar construction in said $\i$-category is equivalent as a simplicial object in the $\infty$-category $\Fun(\CAlg^\mathrm{cN}_{/\kappa}, \mS)$
$$ \mathrm{\check{C}}^\bullet(\mathrm{Def}^\mathrm{or}_{\w{\G}_0}\to *)\simeq \mathrm B^\bullet_{\mathscr G}\big(\Spf(W^+(\kappa)), \mathrm{Def}^\mathrm{or}_{\w{\G}_0}\big) $$
to the \v{C}ech nerve of (the terminal map of) $\mathrm{Def}^\mathrm{or}_{\w{\G}_0}$.
\end{prop}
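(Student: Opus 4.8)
The plan is to rerun the proofs of Proposition \ref{King of Kings} and its supporting Lemma \ref{Action} essentially verbatim, stopping short of the single step---Lemma \ref{group is group}---in which the hypothesis $\kappa=\overline{\mathbf F}_p$ was actually used. That lemma exploited the fact that every field endomorphism of $\overline{\mathbf F}_p$ is an automorphism in order to split $\underline{\mathrm{Aut}}(\w{\G}_0)_\mathrm{dR}\simeq\mathbb G(\kappa,\w{\G}_0)\times\Spf(W^+(\kappa))$, thereby trading the relative automorphism group for the constant Morava stabilizer group. Over a general perfect field this splitting fails, so the idea is simply to retain $\mathscr G=\underline{\mathrm{Aut}}(\w{\G}_0)_\mathrm{dR}$ as a genuine group object over $\Spf(W^+(\kappa))\simeq\Spec(\kappa)_\mathrm{dR}$.

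First I would treat the unoriented deformations. Restricting to connective objects---legitimate since $\mathrm{Def}_{\w{\G}_0}$ is determined by its restriction to connective $\E$-rings by \cite{Elliptic 2}---the relative de Rham identification $\mathrm{Def}_{\w{\G}_0}\simeq(\Spec(\kappa)/\mathcal M_\mathrm{FG})_\mathrm{dR}$ of \cite{Elliptic 2} holds over any perfect field, as Lurie's construction permits arbitrary perfect $\mathbf F_p$-algebra bases. This places us in the monodromy setting of Lemma \ref{Monodromy}, with ``point'' $\Spec(\kappa)_\mathrm{dR}\simeq\Spf(W^+(\kappa))$, ``base space'' $(\mathcal M_\mathrm{FG})_\mathrm{dR}$, and ``fiber'' $\mathrm{Def}_{\w{\G}_0}$. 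Part \ref{deux} of Lemma \ref{Monodromy} then directly furnishes the monodromy action of the based loop space $\mathscr G=\underline{\mathrm{Aut}}(\w{\G}_0)_\mathrm{dR}$ on $\mathrm{Def}_{\w{\G}_0}$ over $\Spf(W^+(\kappa))$, together with the equivalence
$$
\mathrm B^\bullet_{\mathscr G}\big(\Spf(W^+(\kappa)),\mathrm{Def}_{\w{\G}_0}\big)\simeq\mathrm{\check{C}}^\bullet(\mathrm{Def}_{\w{\G}_0}\to\mathcal M_\mathrm{FG}).
$$
Crucially, Lemma \ref{Monodromy} already matches a bar construction formed in the overtopos $\Fun((\CAlg^\mathrm{cN}_{/\kappa})^\mathrm{cn},\mS)_{/\Spf(W^+(\kappa))}$ with an \emph{absolute} \v{C}ech nerve, so no additional manipulation of base points is needed.

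Next I would pass to oriented deformations exactly as in Proposition \ref{King of Kings}. The defining decomposition \eqref{useful}, together with the observation that the factor $\mathcal M_\mathrm{FG}^\mathrm{or}$ contributes a single point when $A$ is complex orientable and the empty set otherwise, gives the base-change of simplicial objects
$$
\mathrm{\check{C}}^\bullet(\mathrm{Def}^\mathrm{or}_{\w{\G}_0}\to *)\simeq\mathrm{\check{C}}^\bullet(\mathrm{Def}_{\w{\G}_0}\to\mathcal M_\mathrm{FG})\times_{\mathcal M_\mathrm{FG}}\mathcal M_\mathrm{FG}^\mathrm{or};
$$
this step concerns only orientations and is insensitive to the base field. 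Since the functors in the unoriented monodromy equivalence are, like $\mathrm{Def}_{\w{\G}_0}$ itself, determined by their restriction to connective objects (by \cite{Elliptic 2}), that equivalence propagates to all of $\CAlg^\mathrm{cN}_{/\kappa}$; base-changing it against $\mathcal M_\mathrm{FG}^\mathrm{or}$ via \eqref{useful} then yields the asserted $\mathscr G$-action on $\mathrm{Def}^\mathrm{or}_{\w{\G}_0}$ in $\Fun(\CAlg^\mathrm{cN}_{/\kappa},\mS)_{/\Spec(W^+(\kappa))}$ with the desired \v{C}ech-nerve description.

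I do not expect a genuinely new obstacle here: the entire content is the verification that Lemma \ref{group is group} was the \emph{only} ingredient special to the algebraically closed case, and that every other input---the de Rham description, the monodromy Lemma \ref{Monodromy}, and the orientation base-change---is independent of the residue field. The one point demanding care is the bookkeeping of base points, namely confirming (as in Remark \ref{Captain Planet}) that the products defining the left-hand bar construction are formed over $\Spf(W^+(\kappa))$ while the resulting \v{C}ech nerve on the right is absolute, and that $\Spec(\kappa)_\mathrm{dR}\simeq\Spf(W^+(\kappa))$ is indeed the base appearing in the statement.
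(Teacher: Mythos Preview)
Your proposal is correct and takes essentially the same approach as the paper: the paper's proof consists of the single observation that Lemma \ref{group is group} is the only place in the proof of Proposition \ref{King of Kings} (via Lemma \ref{Action}) where $\kappa=\overline{\mathbf F}_p$ is invoked, and that omitting it yields the stated result with $\mathscr G=\underline{\mathrm{Aut}}(\w{\G}_0)_\mathrm{dR}$ in place of the Morava stabilizer group. You have simply unpacked this in more detail, including the bookkeeping about base points and the passage from connective to all $\E$-rings, both of which are left implicit in the paper.
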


\begin{proof}
The only step of the proof of Proposition \ref{King of Kings} that employs the assumption $\kappa =\overline{\mathbf F}_p$ is in the proof of Lemma \ref{group is group}.
The rest of the argument, including the proof of Lemma \ref{Action}, goes through for any perfect field $\kappa$ of positive characteristic, giving the stated result.
\end{proof}

 Proposition \ref{Queen of Queens} equips $\Spf(E(\kappa, \w {\G}))$ with a $\mathscr G$-action, though this time we need to be working in the relative setting over $\Spf (W^+(\kappa))$.
This may be viewed  as an incarnation of a $\mathscr G$-action on the Lubin-Tate spectrum $E(\kappa, \w{\G}_0 )$ in the $\infty$-category
$\CAlg^{\mathrm{ad}}_{/W^+(\kappa)}$.
Just as before, we  obtain a workable definition of continuous homotopy fixed points by setting
$$
E(\kappa, \w{\G}_0 )^{h\mathscr G}:= \sO\left(\Spf(E(\kappa, \w{\G}_0 ))/\mathscr G\right),
$$
and 
the  analogue of the Devinatz-Hopkins Theorem holds as follows.

\begin{prop}\label{DHL}
Let formal group $\w{\G}_0$ of height $n<\infty$ over a perfect field $\kappa$ of positive characteristic. 
With notation as above,  the initial map $L_{K(n)}S \to E(\kappa, \w{\G}_0)$ in $L_{K(n)}\Sp$ induces an equivalence of spectra
$
E(\kappa, \w{\G}_0 )^{h\mathscr G}\simeq L_{K(n)}S.
$
\end{prop}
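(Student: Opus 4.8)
The plan is to run the proof of Theorem~\ref{DH} essentially verbatim, with $E(\kappa, \w{\G}_0)$ in place of $E_n$ and $\mathscr G$ in place of $\mathbb G_n$, and with Proposition~\ref{Queen of Queens} furnishing the bar construction in place of Proposition~\ref{King of Kings}. First I would observe that, just as before, the right-hand side of the equivalence in Proposition~\ref{Queen of Queens} computes $\sO$ of the quotient $\Spf(E(\kappa,\w{\G}_0))/\mathscr G$ upon totalization, while the left-hand side is the \emph{absolute} \v Cech nerve $\check{\mathrm C}^\bullet(\mathrm{Def}^\mathrm{or}_{\w{\G}_0}\to *)\simeq \Spf(E(\kappa,\w{\G}_0))^{\times(\bullet+1)}$, the products being formed over the terminal object despite the bar construction being relative over $\Spf(W^+(\kappa))$. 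Applying the functor $\sO$, which carries such products to the coproduct $\widehat\otimes$ in $\CAlg^\mathrm{ad}_\mathrm{cpl}$, I obtain
$$
E(\kappa,\w{\G}_0)^{h\mathscr G}\simeq \Tot\big(E(\kappa,\w{\G}_0)^{\widehat\otimes(\bullet+1)}\big).
$$

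Next I would invoke \cite[Corollary 4.5.4]{Elliptic 2}, which identifies completion in the $\i$-category of $E(\kappa,\w{\G}_0)$-modules with $K(n)$-localization for any height $n$ Lubin-Tate spectrum, to rewrite the completed Amitsur complex as $L_{K(n)}(E(\kappa,\w{\G}_0)^{\otimes(\bullet+1)})$. At this point the claim reduces, exactly as in \eqref{uhat ue do in the shadous}, to showing that the initial map induces an equivalence $\Tot\big(L_{K(n)}(E(\kappa,\w{\G}_0)^{\otimes(\bullet+1)})\big)\simeq L_{K(n)}S$, which I would deduce from the Hopkins-Ravenel Smashing Product Theorem \cite[Theorem 7.5.6]{Ravenel} and the resulting $E_n$-nilpotence of $L_nS$ via the same nilpotence technology used for Theorem~\ref{DH}.

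I expect the only genuine obstacle to be verifying that this nilpotence input survives the passage from $\overline{\mathbf F}_p$ to a general perfect field $\kappa$. The crucial point is that $E(\kappa,\w{\G}_0)$ remains a complex periodic spectrum of height $n$, so its Bousfield class coincides with that of $E_n$ and one still has $L_{E(\kappa,\w{\G}_0)} = L_n$; enlarging $\kappa$ affects only the Lubin-Tate coefficient ring, not the Morava $K$-theory $K(n)$ or the localization functors $L_n$ and $L_{K(n)}$ that control the argument. Consequently the horizontal vanishing line and the smashing and pro-constancy statements transfer without modification, and no new analytic work beyond recording this invariance is required.
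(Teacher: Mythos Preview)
Your proposal is correct and follows the paper's own approach essentially verbatim: the paper's proof of Proposition~\ref{DHL} simply says that the proof of Theorem~\ref{DH} goes through, citing \cite[Proposition 5.2.6]{Ambi} for the nilpotence claim over a general perfect field. Your justification of that step via Bousfield-class invariance of $E(\kappa,\w{\G}_0)$ (so that $L_{E(\kappa,\w{\G}_0)}=L_n$ and the Hopkins--Ravenel argument applies unchanged) is a valid alternative to the paper's direct citation, and indeed the paper records the same Bousfield-class observation in the remark immediately following the proposition.
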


\begin{proof}
The proof of Theorem \ref{DH} works just as well in this setting, provided we use \cite[ Proposition 5.2.6]{Ambi} for the nilpotence claim.
\end{proof}

\begin{remark}
As explained in \cite[Notation 2.1.10]{Ambi}, every Lubin-Tate spectrum $E(\kappa, \w{\G}_0)$ gives rise to a Morava $K$-theory $K(\kappa, \w{\G}_0)$. It might seem like we should have used the localization functor $L_{K(\kappa, \w{\G}_0 )}$ in Proposition \ref{DHL}. Alas this does not matter, since even though the 
 spectra $K(\kappa, \w{\G}_0 )$ do depend on the base-field and formal group used to define them
, the induced localization functor does not. By \cite[Remark 2.1.14]{Ambi}, the Bousfield localization functor $L_{K(\kappa, \w{\G}_0 )}\simeq L_{K(n)}$ only depends on the characteristic of the field $\kappa$ and the height $n$ of the formal group $\w{\G}_0$.
\end{remark}

In particular, we obtain by the same proof as Corollary \ref{karfiola} a ``derived Morava module" presentation of the $K(n)$-local stable category for every height $n$ formal group $\w{\G}_0$ over a perfect field $\kappa$ of positive characteristic.

\begin{corollary} Keeping all the notation from Proposition \ref{DHL},
there is a canonical equivalence of symmetric monoidal $\i$-categories $\QCoh(\Spf (E(\kappa,  \w{\G}_0)/\mathscr G)\simeq L_{K(n)}\Sp. $
\end{corollary}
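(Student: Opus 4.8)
The plan is to run the proof of Corollary \ref{karfiola} essentially verbatim, substituting Proposition \ref{Queen of Queens} for Proposition \ref{King of Kings} and Proposition \ref{DHL} for Theorem \ref{DH}. Write $E := E(\kappa, \w{\G}_0)$ for brevity. First I would use that $\QCoh$ is defined by Kan extension from affines and therefore carries the geometric realization presenting the quotient $\Spf(E)/\mathscr G$ to a totalization: applying $\Mod^\mathrm{cpl}_{(-)}$ to the rings of functions on the simplicial pieces of the quotient expresses $\QCoh(\Spf(E)/\mathscr G)$ as the totalization of the resulting cosimplicial $\i$-category.

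The key point, and the place where the general-base setting could cause trouble, is that although the $\mathscr G$-action and its bar construction in Proposition \ref{Queen of Queens} live in the relative topos $\Fun(\CAlg^\mathrm{cN}_{/\kappa}, \mS)_{/\Spf(W^+(\kappa))}$, the underlying simplicial object is identified there with the \v{C}ech nerve $\mathrm{\check{C}}^\bullet(\mathrm{Def}^\mathrm{or}_{\w{\G}_0}\to *)$ taken with respect to the \emph{absolute} terminal map. Consequently its $(\bullet+1)$-st level is the absolute power $\Spf(E)^{\times(\bullet+1)}$, whose ring of functions is the iterated completed smash product $E^{\widehat{\otimes}(\bullet+1)}$ of \cite[Corollary 7.3.5.2]{HA}, exactly as in the $\kappa = \overline{\mathbf F}_p$ case. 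I would check that the relative structure over $\Spf(W^+(\kappa))$ never enters the function-level computation, precisely because $\sO$ and $\QCoh$ only see this underlying absolute simplicial diagram. Rewriting completion as $K(n)$-localization by means of \cite[Corollary 4.5.4]{Elliptic 2} then gives
$$
\QCoh(\Spf(E)/\mathscr G)\simeq \Tot\big(\Mod^\mathrm{cpl}_{E^{\widehat{\otimes}(\bullet+1)}}\big)\simeq \Tot\Big(L_{K(n)}\Mod_{L_{K(n)}(E^{\otimes(\bullet+1)})}\Big).
$$

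To finish, I would identify this totalization with $L_{K(n)}\Sp$ via \cite[Proposition 10.10]{Mathew}, just as in Corollary \ref{karfiola}. What makes this legitimate over a general base is the descent (equivalently, nilpotence) input: where Corollary \ref{karfiola} relied on the nilpotence established in Theorem \ref{DH} through the Hopkins-Ravenel Smashing Product Theorem, here the analogous statement is supplied by Proposition \ref{DHL}, whose proof uses \cite[Proposition 5.2.6]{Ambi} in place of Hopkins-Ravenel. The symmetric monoidal structure is then inherited automatically, since each $\Mod^\mathrm{cpl}_{E^{\widehat{\otimes}(\bullet+1)}}$ is symmetric monoidal and the totalization computing $\QCoh$ of the quotient is one of symmetric monoidal $\i$-categories. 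I expect the only genuine obstacle to be the bookkeeping of the middle paragraph, namely confirming that passing from the relative bar construction to the absolute \v{C}ech nerve is exactly what decouples the function-level computation from the base $\Spf(W^+(\kappa))$; once that is in hand, the argument is formally identical to that of Corollary \ref{karfiola}.
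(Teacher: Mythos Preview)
Your proposal is correct and follows essentially the same approach as the paper, which simply remarks that the corollary is obtained ``by the same proof as Corollary \ref{karfiola}'' with Proposition \ref{DHL} (and hence \cite[Proposition 5.2.6]{Ambi}) supplying the requisite nilpotence input. Your additional care in checking that the relative bar construction over $\Spf(W^+(\kappa))$ from Proposition \ref{Queen of Queens} nonetheless yields the \emph{absolute} \v{C}ech nerve, so that the function-level and $\QCoh$-level computations are unaffected by the base, is a worthwhile clarification that the paper leaves implicit.
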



\section{Spectral sequences}
The goal of this Section is to prove a version of the Morava Change of Rings Theorem, identifying the $K(n)$-local Adams spectral sequence for $E_n$ with the continuous fixed-point spectral sequence for the $\mathbb G_n$-action on $E_n$. The Devinatz-Hopkins Theorem \ref{DH} already guarantees that they converge to (filtrations on) the homotopy groups on the same spectrum, but the actual comparison of the spectral sequences (and interpretation of the second one) will take a little more work.

\subsection{The descent spectral sequence}
Unlike the fundamentally nonconnective  $\Zpf(E_n)$ and its intimidating-looking quotient $\Zpf(E_n)/\mathbb G_n$, the classifying (pre)stack $\mathrm B\mathbb G_n =*/\mathbb G_n$ is quite well-behaved. In particular, it (or better, its sheafification; but since the difference between them does not matter for quasi-coherent sheaves or functions, we will freely switch between them) is representable by a formal spectral stack which, while not quite Deligne-Mumford, is nonetheless quite manageable.

 For instance, $\QCoh(\mathrm B\mathbb G_n)$  admits an accessible $t$-structure by the (formal geometry analogue of) \cite[Proposition 6.2.5.8]{SAG}. Similarly, the descent spectral sequence,  a piece of technology familiar from the theory of topological modular forms, applies to $\mathrm B\mathbb G_n$. The following proof is essentially a repetition of the one in \cite[Chapter 5, Section 3]{TMF}, but since the setting is slightly different, we have opted to spell it out.

\begin{lemma}\label{Lemma DSS}
Let $X:(\CAlg^\mathrm{dN}_{/\kappa})^\mathrm{cn}\to\mS$ be a formal spectral fpqc stack\footnote{Here we are following \cite{SAG}, in that the absence of the adjective ``nonconnective" automatically implies connectivity.} that admits a flat cover $U\to X$, such that all the (non-trivial) fiber products $U\times_X \cdots \times_X U$ are affine formal spectral schemes. For any
 quasi-coherent sheaf $\sF$ on $X$, there exists a canonical Adams-graded spectral sequence
$$
E^{s, t}_2 = \mathrm H^s(X; \pi_t(\sF))\Rightarrow \pi_{t-s}(\Gamma(X; \sF)),
$$
called the \textit{descent spectral sequence}.
\end{lemma}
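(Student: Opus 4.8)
The plan is to present $\Gamma(X;\sF)$ as the totalization of a cosimplicial spectrum arising from the flat cover, and then feed that cosimplicial spectrum into the Bousfield--Kan spectral sequence. Write $U^\bullet = \mathrm{\check C}^\bullet(U\to X)$ for the \v{C}ech nerve of the cover, so that its $n$-th term $U^n = U\times_X\cdots\times_X U$ is, by hypothesis, an affine formal spectral scheme, say $U^n\simeq\Spf(A_n)$. Since $X$ is an fpqc stack and $U\to X$ is a flat cover, faithfully flat descent for quasi-coherent sheaves (the formal-geometric analogue of the results in \cite{SAG}) identifies $\sF$ with the limit of its restrictions $\sF|_{U^\bullet}$, and passing to global sections yields a canonical equivalence
\begin{equation*}
\Gamma(X;\sF)\;\simeq\;\Tot\big(\Gamma(U^\bullet;\sF|_{U^\bullet})\big)\;\simeq\;\Tot\big(\sF(U^\bullet)\big),
\end{equation*}
where the last step uses that global sections over the affine $U^n$ is simply evaluation of $\sF$.

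Next I would invoke the spectral sequence of the totalization tower of the cosimplicial spectrum $Y^\bullet := \sF(U^\bullet)$, following the account in \cite[Chapter 5]{TMF}. For any cosimplicial spectrum it takes the Adams-graded form
\begin{equation*}
E_2^{s,t}\;=\;\pi^s\big(\pi_t(Y^\bullet)\big)\;\Rightarrow\;\pi_{t-s}\big(\Tot(Y^\bullet)\big),
\end{equation*}
where $\pi^s(-)$ denotes the $s$-th cohomotopy group of the cosimplicial abelian group $[n]\mapsto\pi_t(Y^n)$, i.e.\ the $s$-th cohomology of its associated normalized cochain complex. Combined with the equivalence of the previous paragraph, this already produces a spectral sequence abutting to $\pi_{t-s}(\Gamma(X;\sF))$.

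It remains to identify the $E_2$-page with the sheaf cohomology $\mathrm H^s(X;\pi_t\sF)$. The essential input here is flatness of the cover: because each face map of $U^\bullet$ is a flat morphism of affines, homotopy groups commute with the corresponding base change, giving natural isomorphisms $\pi_t(\sF(U^n))\simeq(\pi_t\sF)(U^n)$ compatibly with the cosimplicial structure. This exhibits $[n]\mapsto\pi_t(Y^n)$ as the \v{C}ech complex of the cover $U\to X$ with coefficients in the homotopy sheaf $\pi_t\sF$, a quasi-coherent sheaf on the underlying classical stack. Affineness of the intersections $U^n$ then forces the higher quasi-coherent cohomology on each term to vanish, so \v{C}ech cohomology computes the derived functors of global sections, and hence $\pi^s(\pi_t(Y^\bullet))\simeq\mathrm H^s(X;\pi_t\sF)$, as desired.

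I expect the main obstacle to lie not in any single calculation but in assembling the descent formalism cleanly in the nonconnective, formal setting: one must know that $\QCoh$ satisfies fpqc descent along $U\to X$ and that pushforward to the point commutes with the relevant totalization, which is exactly what the hypotheses ``fpqc stack'' and ``affine intersections'' are there to guarantee, reducing everything to the affine descent statements of \cite{SAG}. A secondary subtlety is convergence: the Bousfield--Kan spectral sequence is only conditionally convergent in general, so I would observe that strong convergence holds in the intended applications, where the relevant cosimplicial towers are suitably bounded.
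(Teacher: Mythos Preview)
Your proposal is correct and follows essentially the same route as the paper: form the \v{C}ech nerve of the flat cover, apply the Bousfield--Kan spectral sequence to the resulting cosimplicial spectrum of sections, and identify the $E_2$-page with sheaf cohomology using that flatness lets $\pi_t$ commute with pullback while affineness makes global sections $t$-exact. The paper likewise notes only conditional convergence.
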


\begin{proof}[Proof]
Let $U\to X$ be a flat cover by an affine formal spectral scheme as postulated in the statement of the Lemma. It gives rise to a \v{C}ech nerve $\check{\mathrm C}^\bullet(U\to X)$ and hence a cosimplicial spectrum $\Gamma(\check{\mathrm C}^\bullet(U\to X); \sF\vert_{\check{C}^\bullet(U\to X)})$ with totalization $\Gamma( X; \sF)$. We claim that the Bousfield-Kan spectral sequence of this cosimplicial spectrum, see for instance \cite[Remark 1.2.4.4]{HA}, is the desired spectral sequence.

It converges (albeit only conditionally) to the homotopy groups of the totalization of the cosimplicial spectrum by (an opposite variant of) \cite[Proposition  1.2.2.14]{HA}. Hence it remains to show that the $E_2$ page is of the desired form. If $C^* : \Fun(\Delta, \Ab)\to \mathrm{Ch}(\Ab)^{\ge 0}$ denotes the cochain complex associated to a cosimplicial abelian group\footnote{This is the functor that participates in one direction of the Dold-Kan correspondence; see \cite[Definition 1.2.3.8]{HA} for the opposite version.}, then the second page of the Bousfield-Kan spectral sequence of a cosimplicial spectrum $M^\bullet$ may be expressed as cochain complex cohomology
$$
E_2^{s,t}=\mathrm H^s(C^*(\pi_t(M^\bullet))).
$$
To apply this to the cosimplicial object in question, we must therefore determine the homotopy groups
$$
\pi_t\Gamma(\check{\mathrm C}^\bullet(U\to X); \sF\vert_{\check{C}^\bullet(U\to X)})
\simeq
\pi_t\Gamma(U\times_X\cdots \times_X U; f^*(\sF)),
$$
where $f : U\times_X\cdots \times_X U\to X$ is the canonical map. Since $U\to X$ is flat by assumption, the same holds for $f$, and so $f^*\circ \pi_t\simeq \pi_t \circ f^*$ - see \cite[Proposition 7.2.2.13]{HA} for the affine case, from which it follows for an arbitrary flat morphism by the yoga of \cite[Subsection 6.2.5]{SAG}. Secondly, the fiber product $U\times_X\cdots \times_X U$ is affine by hypothesis, from which it follows that its global sections functor is $t$-exact. Putting all that together, we find that
$$
\pi_t\Gamma(U\times_X\cdots \times_X U; f^*(\sF))\simeq \Gamma(U\times_X\cdots \times_X U; f^*(\pi_t(\sF))),
$$
and so the $E_2$ page of the spectral sequence in question is just the standard \v{C}ech cohomology procedure for computing the $s$-th sheaf cohomology group of the quasi-coherent sheaf $\pi_t(\sF)$ on $X$.
\end{proof}

\begin{remark}
Though the approach using a cover that we sketched above will be the most convenient for us in what follows, the descent spectral sequence does not depend on that choice from the second page onwards. It may alternatively even be obtained in an invariant way: the assumptions on the stack $X$ ensure that $\QCoh(X)$ admits a well-behaved $t$-structure. Then the spectral sequence associated to the filtered object $\mathrm N(\mathbf Z)\ni n \mapsto \Gamma(\tau_{\ge -n}(X); \sF)\in \Zp$ by \cite[Definition 1.2.2.9]{HA} again gives rise to the descent spectral sequence after an appropriate re-indexing.
\end{remark}

\subsection{The Adams spectral sequence}\label{Adams section}
We wish to apply the descent spectral sequence on the quotient stack $\mathrm B\mathbb G_n$, for which we  need a quasi-coherent sheaf on it.
Consider the map $q:\Zpf (E_n)/\mathbb G_n\to\mathrm B\mathbb G_n$, induced on quotients by the terminal projection $p : \Zpf(E_n)\to *$. Using the push-forward functionality of quasi-coherent sheaves, we define the desired sheaf as $\sE_n := q_*(\sO_{\Zpf(E_n)/\mathbb G_n)})\in \QCoh(\mathrm B\mathbb G_n)$. As we will need it in the subsequent Proposition, let us identify the fiber of this quasi-coherent sheaf at the point $i :*\to */\mathbb G_n\simeq \mathrm B\mathbb G_n$. By invoking base-change along the pullback square
$$
\xymatrix{
\Zpf(E_n)\ar[r]^{\quad p} \ar[d] & {*}\ar[d]^i\\
\Zpf(E_n)/\mathbb G_n\ar[r]^{\quad q} &\mathrm B\mathbb G_n,
}
$$
we find this fiber to be
$$i^*(\sE_n)\simeq i^*q_*(\sO_{{\Zpf(E_n)}/\mathbb G_n})\simeq p_*(\sO_{\Zpf(E_n)})\simeq E_n.$$

\begin{prop}\label{ss}
The descent spectral sequence for the quasi-coherent sheaf $\sE_n$ on $\mathrm B\mathbb G_n$ is isomorphic to 
$$
E_2^{s,t}= \mathrm{Ext}^{s,t}_{\pi_*(L_{K(n)}(E_n\otimes E_n))}(\pi_*(E_n), \pi_*(E_n))\Rightarrow \pi_*(L_{K(n)}S),
$$
the $K(n)$-local Adams spectral sequence for $E_n$.
\end{prop}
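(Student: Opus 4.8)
The strategy is to identify the descent spectral sequence for $\sE_n$ on $\mathrm B\mathbb G_n$ with the $K(n)$-local Adams spectral sequence by matching the cosimplicial objects that generate each one, and then checking that the $E_2$-pages agree. First I would apply Lemma \ref{Lemma DSS} to the cover $i:*\to\mathrm B\mathbb G_n$, whose \v{C}ech nerve is the bar construction $\mathrm B^\bullet_{\mathbb G_n}(*,*)$. By the monodromy machinery of Proposition \ref{King of Kings}, the pullback of $\sE_n$ along this cover is $E_n$ (as computed just before the statement via base-change, $i^*\sE_n\simeq E_n$), and the cosimplicial spectrum computing global sections is
\begin{equation*}
\Gamma\big(\check{\mathrm C}^\bullet(i);\sE_n\big)\simeq \sO\big(\Spf(E_n)^{\times(\bullet+1)}\big)\simeq E_n^{\widehat\otimes(\bullet+1)}\simeq L_{K(n)}\big(E_n^{\otimes(\bullet+1)}\big),
\end{equation*}
the last equivalence by \cite[Corollary 4.5.4]{Elliptic 2} as in the proof of Theorem \ref{DH}. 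This is precisely the $K(n)$-local Amitsur/cobar complex of $E_n$ whose Bousfield-Kan spectral sequence is the $K(n)$-local Adams spectral sequence. Thus the two spectral sequences arise from equivalent cosimplicial spectra, and hence agree from $E_2$ onward.

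**Matching the $E_2$-pages.** The remaining task is to verify that the $E_2$-page produced by Lemma \ref{Lemma DSS}, namely $\mathrm H^s(\mathrm B\mathbb G_n;\pi_t(\sE_n))$, coincides with the stated $\mathrm{Ext}$-group. By the $E_2$-formula from the proof of Lemma \ref{Lemma DSS}, this cohomology is the cohomology of the cochain complex associated to the cosimplicial abelian group $\pi_t$ of the cobar complex, i.e.
\begin{equation*}
E_2^{s,t}=\mathrm H^s\Big(C^*\big(\pi_t\,L_{K(n)}(E_n^{\otimes(\bullet+1)})\big)\Big).
\end{equation*}
The point is that $\big(\pi_*(E_n),\pi_*(L_{K(n)}(E_n\otimes E_n))\big)$ forms a (completed) Hopf algebroid, and the cobar complex $\pi_t\,L_{K(n)}(E_n^{\otimes(\bullet+1)})$ is exactly the standard cobar resolution computing $\mathrm{Ext}$ over this Hopf algebroid. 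I would invoke the flatness hypothesis of Lemma \ref{Lemma DSS}: since $*\to\mathrm B\mathbb G_n$ is flat and the iterated products are affine, $\pi_t$ commutes with the relevant pullbacks, so the cosimplicial abelian group computing the descent $E_2$ is literally the cobar complex of the $\pi_*(E_n)$-comodule $\pi_*(E_n)$. Its cohomology is by definition $\mathrm{Ext}^{s,t}_{\pi_*(L_{K(n)}(E_n\otimes E_n))}(\pi_*(E_n),\pi_*(E_n))$.

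**Anticipated obstacle.** The main technical subtlety is the interplay between completion and the formation of homotopy groups: the smash products $E_n^{\otimes\bullet}$ must be $K(n)$-localized (equivalently, completed) before taking $\pi_*$, and one must ensure that $\pi_*$ of the completed tensor powers genuinely yields the completed Hopf algebroid $\pi_*(L_{K(n)}(E_n\otimes E_n))$ rather than the uncompleted $\pi_*(E_n\otimes E_n)$. This is exactly where the flat-affine hypotheses on the cover feed in, via the identification $\sO(\Spf(E_n)^{\times(\bullet+1)})\simeq E_n^{\widehat\otimes(\bullet+1)}$ in $\CAlg^\mathrm{ad}_\mathrm{cpl}$ established in the proof of Theorem \ref{DH}; the completed smash product is the coproduct there, so the cosimplicial object is manifestly the $K(n)$-local one. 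I expect the identification of the abstract descent $E_2$-term with the named $\mathrm{Ext}$-group to be essentially formal once this is in place, being a direct appeal to the standard description of generalized Adams $E_2$-pages as Hopf-algebroid cohomology, as in \cite[Chapter 7]{Goerss}; the only care needed is tracking the Adams grading conventions so that the bidegrees $(s,t)$ match on both sides.
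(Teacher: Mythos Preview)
Your proposal is correct and follows essentially the same route as the paper: both identify the descent cosimplicial spectrum for the cover $i:\ast\to\mathrm B\mathbb G_n$ with the $K(n)$-local Amitsur complex $L_{K(n)}(E_n^{\otimes(\bullet+1)})$ via Proposition~\ref{King of Kings}, so that the two Bousfield--Kan spectral sequences coincide. The paper spells out the intermediate base-change step $\Gamma(\mathbb G_n^{\times\bullet};p_\bullet^*E_n)\simeq\sO(\mathbb G_n^{\times\bullet}\times\Spf(E_n))$ that you compress into a single citation, and your separate ``Matching the $E_2$-pages'' discussion is redundant once the cosimplicial objects are shown to agree.
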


\begin{proof}
Observe that both spectral sequences in question may be obtained as  Bousfield-Kan spectral sequences of certain cosimplicial spectra. Thus it suffices to exhibit an equivalence between those.

For the descent spectral sequence, we choose the flat cover $i : *\to \mathrm B\mathbb G_n$; indeed, this is a cover by the usual yoga of classifying stacks, and it is flat thanks to the Morava stabilizer group $\mathbb G_n$ being pro-\'etale and as such flat. Then the \v{C}ech nerve of $i$ is given by
$
\check{\mathrm C}^\bullet(*\to\mathrm B\mathbb G_n)\simeq \mathbb G_n^{\times \bullet},
$
and coincides with the bar construction of $\mathbb G_n$. Let $p_\bullet :\mathbb G_n^{\times\bullet}\to *$ denote the terminal map. Then it follows from the computation preceding the statement of the Proposition that
$
\sF\vert_{\check{\mathrm C}^\bullet(*\to\mathrm B\mathbb G_n)}\simeq  p_\bullet^*(E_n),
$ and so the cosimplicial spectrum that gives rise to the relevant descent spectral sequence is
$
\Gamma(\mathbb G^{\times \bullet}; p_\bullet^*(E_n)),
$
with the cosimplicial structure inherited from the bar construction of $\mathbb G_n$.

For the Adams spectral sequence,
let us apply the functor $\sO$ to the equivalence of simplicial objects of Proposition \ref{King of Kings}. We obtain an equivalence of cosimplicial spectra
$$
L_{K(n)}\big(E_n^{\otimes(\bullet +1)}\big)\simeq \sO(\mathbb G_n^\bullet\times \Zpf(E_n)).
$$
The left-hand side (for recognizing which, we have made use of a calculation from the proof of Theorem \ref{DH}), gives rise to the $K(n)$-local Adams spectral sequence for $E_n$.
To tackle the left-hand side, consider the Cartesian diagram
$$
\xymatrix{
\mathbb G_n^{\times \bullet}\times \Spf (E_n)\ar[r]^{\quad\mathrm{pr_2}} \ar[d]_{\mathrm{pr}_1}& \Spf(E_n)\ar[d]^p\\
\mathbb G_n^{\times\bullet}\ar[r]^{p_\bullet}& {*}.
}
$$
Using base-change along it, we have a series of equivalences
\begin{eqnarray*}
\sO({\mathbb G_n^{\times\bullet}\times \Spf(E_n)}) &\simeq &
\Gamma\big(\mathbb G_n^{\times\bullet}\times \Spf(E_n); \sO_{\mathbb G_n^{\times\bullet}\times \Spf(E_n)}\big) \\
&\simeq& \Gamma(\mathbb G_n^{\times \bullet} ;(\mathrm{pr}_1)_*\mathrm{pr}_2^*(\sO_{\Spf(E_n)}))\\
&\simeq& \Gamma(\mathbb G_n^{\times \bullet} ;p_\bullet^*p_*(\sO_{\Spf(E_n)}))\\
&\simeq &\Gamma(\mathbb G_n^{\times\bullet}; p_\bullet^*(E_n)),
\end{eqnarray*}
and because the cosimplicial structure comes at each step from the bar construction on $\mathbb G_n$, this is an equivalence of cosimplicial spectra.
Since we already saw that the thus-obtained cosimplicial spectrum gives rise to the descent spectral sequence for $\mathrm B\mathbb G_n$, we are done.
\end{proof}

\begin{remark}\label{ANZZ for real} By working in a non-formal setting, we may argue similarly to the above in order to obtain the Adams-Novikov spectral sequence as a special case of a descent spectral sequence (this is also explained in \cite[Remark 9.3.1.9]{SAG}).
Indeed, consider the $\E$-ring $\tau_{\ge 0}(\mathrm{MP})$, the connective cover of the periodic complex bordism spectrum. As we saw in Remark \ref{remarques}, it gives rise to a ``based loop space" $\Omega_{x_{\tau_{\ge 0}(\mathrm{MP})}}(\Spec(S))$ in spectral stacks over $\Spec(\tau_{\ge 0}(\mathrm{MP}))$. Let $\mX$ denotes the classifying (pre)stack of this spectral group scheme. Its underlying ordinary stack is given by 
$$\tau_{\le 0}(\mX)\simeq \Spec(\pi_0(\mathrm{MP}))/\Spec(\pi_0(\mathrm{MP}\otimes \mathrm{MP}))\simeq \mathcal M_\mathrm{FG}^\heart,$$
which is identified with the ordinary stack of formal groups by a celebrated theorem of Quillen. On the other hand, its (derived) ring of functions is given by
$$
\sO(\mX) \simeq  \Tot \big(\tau_{\ge 0}(\mathrm{MP})^{\otimes(\bullet +1)}\big)
= S^\wedge_{\tau_{\ge 0}(\mathrm{MP})}\simeq S,
$$
the so-called $\tau_{\ge 0}(\mathrm{MP})$-\textit{nilpotent completion}\footnote{This standard construction is often attributed to Adams and Bousfield in the literature, and sometimes just called ``derived completion", though that is unfortunate terminology, since nilpotent completion and the standard notion of derived completion only sometimes agree.} of the sphere spectrum, well-known to agree with the sphere spectrum itself. Since $\tau_{\le 0}(\mathrm{MP})\otimes \tau_{\le 0}(\mathrm{MP})$ is a flat $\tau_{\le 0}(\mathrm{MP})$-module (see \cite[Theorem 5.3.13]{Elliptic 2}), a variant of Lemma \ref{Lemma DSS} applies to the cover $\mathrm{Spec}(\tau_{\le 0}(\mathrm{MP}))\to\mX$. The resulting descent spectral sequence converges to $\pi_*(S)$, while by an argument analogous to our proof of Proposition \ref{ss}, its second page is
$$
E^{s,t}_2 =\mathrm H^s(\mathcal M_\mathrm{FG}^\heart; \pi_t(\sO_{\mX}))\simeq \mathrm{Ext}^{s,t}_{\pi_*(\tau_{\ge 0}(\mathrm{MP}\otimes\mathrm{MP}))}(\pi_*(\tau_{\ge 0}(\mathrm{MP})), \pi_*(\tau_{\ge 0}(\mathrm{MP}))),
$$
viewable either as sheaf cohomology on the underlying ordinary stack, or as (a 2-periodic variant of) the Adams-Novikov spectral sequence.
\end{remark}

\subsection{Homotopy fixed-point spectral sequence}

Let us say a few words about the interpretation of Proposition \ref{ss}.
We  may view $\QCoh(\mathrm B\mathbb G_n)$ as a version of continuous discrete representations of the Morava stabilizer group over the sphere spectrum. From that perspective, the underlying spectrum of a quasi-coherent sheaf $\sF$ on $\mathrm B\mathbb G_n$ is given by the fiber $i^*(\sF) = M$ (keeping the notation $i: *\to\mathrm B\mathbb G_n$ from the previous subsection), and the sheaf structure on $\sF$ witnesses the $\mathbb G_n$-action on $M$. The (continuous) fixed-points of this action are incarnated as global sections $M^{h\mathbb G_n} :=\Gamma( \mathrm B\mathbb G_n; \sF)$, and continuous group cohomology is given in terms of sheaf cohomology as
$$
\mathrm H^i_\mathrm{cont}(\mathbb G_n; M):=\mathrm H^i(\mathrm B\mathbb G_n; \sF)\simeq \pi_{-i}(M^{h\mathbb G_n}).
$$
Under these identifications, the descent spectral sequence for $\mathrm B\mathbb G_n$ corresponds to the fixed-point spectral sequence for $\mathbb G_n$
$$
E^{s, t}_2 =\mathrm H^s_\mathrm{cont}(\mathbb G_n;\pi_t(E_n))\Rightarrow \pi_{t-s}(E_n^{h\mathbb G_n}).
$$

\begin{remark}\label{Justicia}
In line with the preceding discussion, the sheaf $\sE_n$ on $\mathrm B\mathbb G_n$ encodes a continuous $\mathbb G_n$-action on the Lubin-Tate spectrum $E_n$. Its continuous homotopy fixed-points, in the above sense, are given by
$$
E_n^{h\mathbb G_n}\simeq \Gamma(\mathrm B\mathbb G_n; \sE_n)\simeq p_*q_*(\sO_{\Zpf(E_n)/\mathbb G_n})\simeq \Gamma(\Zpf(E_n)/\mathbb G_n; \sO_{\Zpf(E_n)/\mathbb G_n}).
$$
That agrees with (and perhaps justifies) our definition in Section \ref{Proof's home}, and its use in the Devniatz-Hopkins Theorem \ref{DH} in particular.
\end{remark}

\begin{remark}
There exist a number of precise incarnations of the $\infty$-category of continuous $\mathbb G_n$-spectra in the literature, e.g.\ of \cite{Davies} or \cite{Quick}. In each, the construction from \cite{DevHop} is enhanced (relying heavily on Devinatz-Hopkins'  detailed study of finite subgroup actions) to produce a version of $E_n$ in the respective category. Instead, we claim that
 $\QCoh(\mathrm B\mathbb G_n)$ should be viewed as an incarnation of continuous $\mathbb G_n$-spectra, sufficient for our purposes, but not intended to supplant the more sophisticated theories mentioned above (a careful comparison with which we decline to carry out).
\end{remark}

\begin{remark}
In spite of the preceding Remark, let us observe that our model at  least gives rise to spectra with a $\mathbb G_n$-action in the sense of \cite[Definition 2.2]{Determinant Tate}, referred to there as ``a simple sense of continuity". Indeed, in light of Remark \ref{continuous cochains}, a $\mathbb G_n$-action on $M$ in our sense gives rise to an augmented cosimplicial diagram $M\to C^*_\mathrm{cont}(\mathbb G_n^{\times (\bullet +1)}; M)$.
In fact, our approach to continuous $\mathbb G_n$-actions is, via the bar resolution $\mathrm B\mathbb G_n\simeq |\mathbb G_n^{\times\bullet}|$, essentially equivalent to the one of \cite{Determinant Tate}. Their restriction to the $K(n)$-local setting is mirrored in our set-up by working in the setting of formal algebraic geometry, i.e.\,inside the $\i$-category $\Fun(\CAlg^\mathrm{cN}_{/\kappa}, \mS)$ instead of  say $\Fun(\CAlg, \mS)$.
\end{remark}

\begin{remark}
With $M$ as in the previous Remark, we find by unwinding the proof of Proposition \ref{ss} that the descent spectral sequence for the corresponding sheaf on $\mathrm B\mathbb G_n$ is obtained as the Bousfield-Kan spectral sequence of the cosimplicial object $C^*_\mathrm{cont}(\mathbb G_n^{\times \bullet }; M)$. That is also one traditional approach to defining the homotopy fixed-point spectral sequence (for a compact Lie group, say),  somewhat justifying our identification of the two.
\end{remark}

With all the notation in place, the following is a formal consequence of Proposition \ref{ss}.

\begin{corollary}[Morava's Change of Rings Isomorphism]
The second page of the $K(n)$-local Adams spectral sequence of the Lubin-Tate spectrum $E_n$ may be expressed as continuous group cohomology
$
E^{s,t}_2 = \mathrm H^s_\mathrm{cont} (\mathbb G_n; \pi_t(E_n )).
$
\end{corollary}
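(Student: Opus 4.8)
The plan is to leverage the fact that essentially all of the work has already been carried out in Proposition \ref{ss}, so that this corollary is genuinely formal — as its preamble advertises. First I would invoke Proposition \ref{ss} to identify the $K(n)$-local Adams spectral sequence for $E_n$ with the descent spectral sequence for the quasi-coherent sheaf $\sE_n$ on $\mathrm B\mathbb G_n$; in particular, the two spectral sequences being isomorphic, their second pages agree. By Lemma \ref{Lemma DSS}, the second page of the descent spectral sequence is the sheaf cohomology $\mathrm H^s(\mathrm B\mathbb G_n; \pi_t(\sE_n))$, so it remains only to recognize this group as continuous group cohomology.

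For that recognition I would appeal directly to the definitions assembled just above, where continuous group cohomology of a continuous discrete $\mathbb G_n$-representation — that is, of a quasi-coherent sheaf $\sF$ on $\mathrm B\mathbb G_n$ with underlying spectrum the fiber $M = i^*(\sF)$ at the point $i: * \to \mathrm B\mathbb G_n$ — is \emph{defined} to be $\mathrm H^s_\mathrm{cont}(\mathbb G_n; M) := \mathrm H^s(\mathrm B\mathbb G_n; \sF)$. Applying this to the homotopy sheaf $\sF = \pi_t(\sE_n)$ immediately yields $\mathrm H^s(\mathrm B\mathbb G_n; \pi_t(\sE_n)) \simeq \mathrm H^s_\mathrm{cont}(\mathbb G_n; \pi_t(E_n))$, provided its fiber is identified as $\pi_t(E_n)$.

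The one point requiring a word beyond bookkeeping — and the only place anything beyond definitions enters — is the commutation $i^*(\pi_t(\sE_n)) \simeq \pi_t(i^*\sE_n) \simeq \pi_t(E_n)$ of the homotopy-sheaf functor with restriction to the point. This uses flatness of the cover $i: * \to \mathrm B\mathbb G_n$, which is already established in the proof of Proposition \ref{ss} from the pro-\'etale (hence flat) nature of $\mathbb G_n$, exactly as exploited in the proof of Lemma \ref{Lemma DSS}; combined with the fiber computation $i^*\sE_n \simeq E_n$ recorded just before Proposition \ref{ss}, this finishes the argument. I do not anticipate any genuine obstacle here: the substantive content is entirely front-loaded into Proposition \ref{ss}, and the present statement merely transcribes its $E_2$-page into the language of continuous group cohomology.
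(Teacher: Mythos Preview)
Your proposal is correct and matches the paper's approach exactly: the paper gives no explicit proof, stating only that the corollary is ``a formal consequence of Proposition \ref{ss}'' once the notation and definitions of the preceding paragraphs are in place. Your write-up simply spells out that formal deduction, including the one minor technical point (the flatness of $i$ ensuring $i^*\pi_t(\sE_n)\simeq \pi_t(i^*\sE_n)\simeq \pi_t(E_n)$), which is already implicit in the paper's discussion.
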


\begin{remark}
One difference between our approach and \cite{DevHop} is that they make use of a form of Morava's Change of Rings isomorphism from \cite{DevMorava} to set up their theory. For us, on the other hand, that result did not feed into the construction of $E_n^{h\mathbb G_n}$ nor its identification with $L_{K(n)}S$, and we could instead derive it from our considerations. Of course, that is largely a cosmetic difference;  Morava's Theorem, even if classically phrased differently, ultimately boils down to algebro-geometric considerations regarding the moduli of formal groups of the sort that we based our approach on.
\end{remark}


\begin{thebibliography}{99}

\bibitem[BBGS18]{Determinant Tate}
T.~Barthel, A.~Beaudry, P.~Goerss, V.~Stojanoska, \textit{Constructing the determinant sphere using a Tate twist}, version October 15, 2018. Preprint available from \href{https://arxiv.org/abs/1810.06651}{arXiv:1810.06651 [math.AT]}

\bibitem[BD10]{Davies}
M.~Behrend, D.~Davies, \textit{The homotopy fixed point spectra of profinite Galois extensions}, Trans. Amer. Math. Soc. 362 (2010), 4983-5042. Preprint available from  \href{https://arxiv.org/abs/0808.1092}{arXiv:0808.1092 [math.AT]}


\bibitem[Dev18]{Zanath}
S.~Devalapurkar, \textit{ The Lubin-Tate stack and Gross-Hopkins duality}, version  16 July 2018. Preprint available from \href{https://arxiv.org/abs/1711.04806}{arXiv:1711.04806 [math.AT]}


\bibitem[Dev95]{DevMorava}
E.~Devinatz, \textit{Morava's change of rings theorem},  The \v{C}ech  centennial  (Boston,  MA,  1993),  Contemp.~Math., vol.~181, Amer.~Math.~Soc., Providence, RI, 1995, pp.~83–118.~MR 1320989 (96m:55007) 

\bibitem[DH04]{DevHop}
E.~Devinatz, M.~Hopkins, \textit{Homotopy fixed point spectra for closed subgroups of the Morava stabilizer groups}. Topology, 43(1): 1–47, 2004.

\bibitem[TMF]{TMF}
C.~Douglas, J.~Francis, A.~Henriques, M.~Hill, \textit{Topological Modular Forms}, volume 201.~American Mathematical Soc., 2014. Preprint available from
\url{https://math.mit.edu/events/talbot/2007/tmfproc/Chapter07/Douglas-Sheaves.pdf}

\bibitem[Goe08]{Goerss}
P.~Goerss, \textit{Quasi-coherent sheaves on the moduli stack of formal groups}, version February 7, 2008. Preprint available from \href{https://arxiv.org/abs/0802.0996}{arXiv:0802.0996 [math.AT]}

\bibitem[HL13]{Ambi}
M.~Hopkins, J.~Lurie, \textit{Ambidexterity in $K(n)$-local stable homotopy theory}, version December 19, 2013. Available from \url{https://www.math.ias.edu/~lurie/papers/Ambidexterity.pdf}


\bibitem[Lur10]{Lurie Chromatic}
J.~Lurie, \textit{Chromatic Homotopy Theory (252x)}, lecture notes from course at Harvard in  2010. Available from \url{http://people.math.harvard.edu/~lurie/252x.html}

\bibitem[Ell1]{Elliptic 1}
J.~Lurie, \textit{Elliptic Cohomology I: Spectral Abelian Varieties}, version February 3, 2018. Available from \url{https://www.math.ias.edu/~lurie/papers/Elliptic-I.pdf}


\bibitem[Ell2]{Elliptic 2}
J.~Lurie, \textit{Elliptic Cohomology II: Orientations}, version April 26, 2018. Available from \url{https://www.math.ias.edu/~lurie/papers/Elliptic-II.pdf}


\bibitem[Ell3]{Elliptic 3}
J.~Lurie, \textit{Elliptic Cohomology III: Tempered Cohomology}, version Jan 05, 2020. Available from \url{https://www.math.ias.edu/~lurie/papers/Elliptic-III-Tempered.pdf}


\bibitem[HA]{HA}
J.~Lurie, \textit{Higher Algebra}, version September 2017. Available from \url{https://www.math.ias.edu/~lurie/papers/HA.pdf}


\bibitem[HTT]{HTT}
J.~Lurie, \textit{Higher Topos Theory}, Princeton University Press, 2009. Version updated April 2017 available from \url{https://www.math.ias.edu/~lurie/papers/HTT.pdf}


\bibitem[SAG]{SAG}
J.~Lurie, \textit{Spectral Algebraic Geometry}, version February 2018. Available from \url{http://www.math.ias.edu/~lurie/papers/SAG-rootfile.pdf}




\bibitem[Mat18]{Math3000}
A.~Mathew, \textit{Examples of descent up to nilpotence}, Geometric and topological aspects of the representation theory of finite groups, 269--311, Springer Proc. Math.~Stat., 242, Springer, Cham, 2018. Preprint available from \href{https://arxiv.org/abs/1701.01528}{arXiv:1701.01528 [math.AT]}


\bibitem[Mat16]{Mathew}
A.~Mathew, \textit{The Galois group of a stable homotopy theory}, 
Adv. Math. 291 (2016), 403-541. Preprint available from \href{https://arxiv.org/abs/1404.2156}{arXiv:1404.2156 [math.CT]}

\bibitem[Qui13]{Quick}
G.~Quick, \textit{Continuous homotopy fixed points for Lubin-Tate spectra}, Homology Homotopy Appl.~Volume 15, Number 1 (2013), 191-222.
 Preprint available from \href{https://arxiv.org/abs/0911.5238}{arXiv:0911.5238 [math.AT]}


\bibitem[Rav92]{Ravenel}
D.~Ravenel, \textit{Nilpotence and periodicity in stable homotopy theory}, volume 128 of Annals of Mathematics Studies. Princeton University Press, Princeton, NJ, 1992. Appendix C by Jeff Smith.

\bibitem[Sch19]{Cond}
P.~Scholze, \textit{Lectures on Condensed Mathematics}, 2019.
Available from \url{https://www.math.uni-bonn.de/people/scholze/Condensed.pdf}


\bibitem[Tor16]{Torii}
T.~Torii. \textit{On quasi-categories of comodules and Landweber exactness}, 2016. Available from
\href{https://arxiv.org/abs/1612.03265}{arXiv:1612.03265 [math.AT]}

\end{thebibliography}
\end{document}